\documentclass[11pt, notitlepage]{article}
\usepackage{amssymb,amsmath,comment}
\catcode`\@=11 \@addtoreset{equation}{section}
\def\thesection{\arabic{section}}

\def\theequation{\thesection.\arabic{equation}}
\catcode`\@=12
\usepackage{caption}
\usepackage{subcaption}
\usepackage{colortbl}%
\usepackage{a4wide}
\usepackage{parskip}
\usepackage{hyperref}
\usepackage{cite}

\newcommand{\ds} {\displaystyle}
\newcommand{\e}{\epsilon}
\newcommand{\pa} {\partial}
\newcommand{\al} {\alpha}
\newcommand{\ba} {\beta}

\newcommand{\de} {\delta}

\newcommand{\Om} {\Omega}
\newcommand{\ra} {\rightarrow}

\newcommand{\De} {\Delta}
\newcommand{\la} {\lambda}

\newcommand{\noi} {\noindent}
\newcommand{\na} {\nabla}

\newcommand{\mb} {\mathbb}
\newcommand{\mc} {\mathcal}

\newcommand{\ld} {\langle}
\newcommand{\rd} {\rangle}
\newcommand{\I}{\int\limits_}

\setcounter{page}{1}\pagestyle{myheadings}
\usepackage[all]{xy}
\catcode`\@=11
\def\theequation{\@arabic{\c@section}.\@arabic{\c@equation}}
\catcode`\@=12

\def\QED{\hfill {$\square$}\goodbreak \medskip}

\newtheorem{Theorem}{Theorem}[section]
\newtheorem{Lemma}[Theorem]{Lemma}
\newtheorem{Proposition}[Theorem]{Proposition}

\newtheorem{Remark}[Theorem]{Remark}
\newtheorem{Definition}[Theorem]{Definition}

\def\XXint#1#2#3{{\setbox0=\hbox{$#1{#2#3}{\int}$ }
		\vcenter{\hbox{$#2#3$ }}\kern-.6\wd0}}

\begin{document}
	{\vspace{0.01in}
		\title
		{A Choquard type equation with a singular absorption nonlinearity in two dimension}

		\author{ {\bf G.C. Anthal\footnote{	Department of Mathematics, Indian Institute of Technology, Delhi, Hauz Khas, New Delhi-110016, India. e-mail: Gurdevanthal92@gmail.com}\;, J. Giacomoni\footnote{ 	LMAP (UMR E2S UPPA CNRS 5142) Bat. IPRA, Avenue de l'Universit\'{e}, 64013 Pau, France. 
		email: jacques.giacomoni@univ-pau.fr}  \;and  K. Sreenadh\footnote{Department of Mathematics, Indian Institute of Technology, Delhi, Hauz Khas, New Delhi-110016, India.	e-mail: sreenadh@maths.iitd.ac.in}}}
		\date{}
		\maketitle
		
		\begin{abstract}
			\noi In this article, we show the existence of a nonnegative solution to the singular problem $(\mc P_\la)$ posed in a bounded domain $\Omega$ in $\mb R^2$ (see below). We achieve this by approximating the singular function $u^{-\beta}\log(u)$ by a function $l_\e(u)$ which pointwisely converges to $-u^\beta\log(u)$ as $\e \ra 0$. Using variational techniques, the perturbed equation  $-\De u+l_\e(u)=\ds\la \left(\I{\Om}\frac{F(u(y))}{|x-y|^\mu}dy\right)f(u(x))$ is shown to have a solution $u_\e \in H_0^{1}(\Om)$ when the parameter $\la >0$ is small enough. Letting $\e \ra 0$ and proving a pointwise gradient estimate, we show that the solution $u_\e$ converges to a nontrivial nonnegative solution of the original problem $(\mc P_\la)$.
			\medskip
			
	\noi \textbf{ Keywords}: Absorption singular nonlinearity, non local problem, Hardy-Littlewood-Sobolev inequality, variational methods, exponential growth. \\
	
	\noi \textit{2020 Mathematics Subject Classification:} 35A15, 35B20, 35D30, 35J20, 35J60, 35J75.
			

		\end{abstract}
		\section {Introduction}
		This article is concerned with the study of following problem involving both an absorption type singularity and exponential nonlinearity of Choquard type
		\begin{equation*}
			(\mc P_\la)  \left\{	-\De u =u^{-\ba} \log(u)\chi_{\{u>0\}} +\la \left(\I{\Om}\frac{F(u(y))}{|x-y|^\mu}dy\right) f(u),~\text{in}~\Om,~u \geq 0,~\text{in}~\Om,~u=0~\text{on}~\pa \Om,	\right.
		\end{equation*}
		where $\Om$ is a smooth bounded domain in $\mb R^2$ and $0<\mu<1$. 
		Here $\chi_{\{u>0\}}$ denotes the characteristic function corresponding to the set $\{x \in \Om:u(x)>0\}$ and by convention $u^{-\ba}\log u  \chi_{\{u>0\}}=0$ if $u\leq0$.\\
			The notion of weak solution is as below:
		\begin{Definition}
		By a weak solution of problem $(\mc P_\la)$, we mean a function $u \in H_0^{1}(\Om)$ such that 
		\begin{equation*}
		u^{-\beta}	\log u  \chi_{\{u>0\}} \in L^1_{loc}(\Om)
		\end{equation*}
		and \begin{equation*}
			\I{\Om}\na u\na \varphi=\I{\Om \cap \{u>0\}} u^{-\ba}\log (u) \varphi +\I{\Om}\I{\Om}\frac{F(u(y))f(u(x))\varphi(x)}{|x-y|^\mu},~\text{for any}~\varphi \in C_c^1(\Om).
		\end{equation*}

	\end{Definition}
		Here $C_c^1(\Om)$ stands for the functions belonging to $C^1(\Om)$ with compact support in $\Omega$. 
		We assume the following hypothesis on the nonlinearity $f$:
		\begin{itemize}
			\item [$(f_1)$] $f$ is of class $C^{1,\beta}(0,\infty) \cap C[0,\infty)$ for some $0<\beta<1$.
			\item [$(f_2)$] For all $\al>0$, $$\lim\limits_{t \ra \infty}\frac{|f(t)|}{\exp(\al t^2)}=0.$$
			\item [$(f_3)$] There exists constant $1<r_0<2$ such that
			\begin{equation*}
 				\limsup\limits_{t \ra 0^+} \frac{|f(t)|}{t^{r_0}}< \infty.
			\end{equation*} 
			\item [$(f_4)$]There exists $l >0$ such that $\ds t \ra \ds \frac{f(t)}{t^l}$ is increasing on $\mb R^+ \setminus \{0\}$.
			\item[$(f_5)$] There exist $T$, $T_0 >0$ and $\gamma_0 >0$ such that $0<t^{\gamma_0}F(t)\leq T_0f(t)$ for all $|t|>T$.
			\item [$(f_6)$] $\limsup\limits_{t\ra 0^+} |f^{\prime}(t)| < \infty$.
		\end{itemize}
	\begin{Remark}
		Using 
		$(f_5)$ it is easy to show that there are constants $A$, $t_0\geq 0$ and $\gamma>1$ such that 
		\begin{equation}\label{1.3}
		F(t)\geq At^{\gamma} ~\text{for all}~ t\geq t_0.
		\end{equation}
Furthermore using $(f_3)$, we see that $f(0)=0$ and $f'(0)=0$.
Hence we can extend $f$ by $0$ on $\mb R^-$ and then as a $C^1$ function on $\mb R$.
\end{Remark}

We state our main result:
\begin{Theorem}\label{tmr}
Suppose that $f$ satisfies $(f_1)-(f_6)$. Then there exists a $\bar{\la}>0$ such that for each $0<\la<\bar{\la}$, problem $(\mc P_\la)$ has a nonnegative nontrivial solution $u$.
\end{Theorem}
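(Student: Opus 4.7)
My plan follows the three-step scheme sketched in the abstract: regularize the singular nonlinearity, solve the perturbed problem variationally, and then pass to the limit $\epsilon\to 0^+$.

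\emph{Step 1 (regularization and mountain pass).} Pick a smooth approximation $l_\epsilon:\R\to\R$ of $-u^{-\ba}\log u\,\chi_{\{u>0\}}$, for instance $l_\epsilon(t)=-(t^++\epsilon)^{-\ba}\log(t^++\epsilon)$ (cut off for large $t$ if needed), and set $L_\epsilon(t)=\int_0^t l_\epsilon(s)\,ds$. The perturbed equation
\[ -\De u+l_\epsilon(u)=\la\Bigl(\I{\Om}\frac{F(u(y))}{|x-y|^\mu}\,dy\Bigr)f(u) \]
is the Euler--Lagrange equation of the functional
\[ J_\epsilon(u)=\tfrac12\I{\Om}|\na u|^2+\I{\Om}L_\epsilon(u)-\tfrac{\la}{2}\I{\Om}\I{\Om}\frac{F(u(x))F(u(y))}{|x-y|^\mu}\,dx\,dy \]
on $H_0^1(\Om)$. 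Combining the Hardy--Littlewood--Sobolev inequality with the two-dimensional Moser--Trudinger inequality, conditions $(f_1)$--$(f_3)$ and $(f_6)$ imply that $J_\epsilon$ has a strict local minimum at $0$, uniformly in $\epsilon$, provided $\la$ is sufficiently small; the super-linearity coming from $(f_4)$--$(f_5)$ and \eqref{1.3} then provides some $e\in H_0^1(\Om)$ with $J_\epsilon(e)<0$. An application of the Mountain Pass Theorem yields a Palais--Smale (or Cerami) sequence at level $c_\epsilon$. By choosing $\bar\la$ small and evaluating $J_\epsilon$ on a Moser-type concentrating sequence, one forces $c_\epsilon$ to lie below the Moser--Trudinger critical threshold, which secures compactness and produces a nontrivial critical point $u_\epsilon\in H_0^1(\Om)$. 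Testing with $u_\epsilon^-$ (using $l_\epsilon(t)\geq 0$ for $t\leq 0$ and the extension $f(t)=0$ for $t\leq 0$) gives $u_\epsilon\geq 0$, and elliptic bootstrapping places $u_\epsilon$ in $L^\infty(\Om)\cap C^{1,\ba}_{loc}(\Om)$.

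\emph{Step 2 (limit $\epsilon\to 0^+$).} Testing the perturbed equation with $u_\epsilon$ and using $(f_5)$ together with the uniform bound on $c_\epsilon$ yields a uniform $H_0^1$-bound on $u_\epsilon$; along a subsequence $u_\epsilon\rightharpoonup u$ weakly in $H_0^1(\Om)$, strongly in $L^q$ for every finite $q$, and a.e.\ in $\Om$. The Choquard term passes to the limit via HLS, dominated convergence, and the exponential-subcritical $L^q$-control of $F(u_\epsilon)$. The crux is the limit of the singular term: I would prove that $l_\epsilon(u_\epsilon)\to -u^{-\ba}\log u\,\chi_{\{u>0\}}$ in $L^1_{loc}(\Om)$. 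For this one needs a uniform pointwise lower bound: on every compact $K\Subset\Om$ there exists $c_K>0$, independent of $\epsilon$, such that $u_\epsilon\geq c_K$ on $K$. I would obtain this by constructing an $\epsilon$-independent subsolution built from the first Dirichlet eigenfunction $\phi_1$ (or the torsion function) raised to a suitable power, exploiting that on the singular region $\{u_\epsilon<1\}$ the nonnegative Choquard source dominates $|l_\epsilon(u_\epsilon)|$ from below. With such a bound, $|l_\epsilon(u_\epsilon)|$ is dominated on $K$ by an $L^1$ function independent of $\epsilon$, so dominated convergence applies. Testing the perturbed equation against $\varphi\in C_c^1(\Om)$ and letting $\epsilon\to 0^+$ yields the weak formulation in the definition above, completing the proof.

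The principal obstacle is precisely the uniform pointwise positivity of $u_\epsilon$: the sign change of $l_\epsilon$ (positive on $\{0<u<1\}$, negative on $\{u>1\}$) prevents a direct torsion-function comparison, and $u^{-\ba}|\log u|$ is only quasi-integrable near $0$. I expect to resolve these issues by a two-region argument --- a subsolution comparison on $\{u_\epsilon<1\}$ combined with a uniform Moser iteration $L^\infty$-bound on $\{u_\epsilon\geq 1\}$ --- glued via the weak maximum principle; this is the pointwise gradient estimate announced in the abstract and constitutes the technical heart of the argument.
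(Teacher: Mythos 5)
Your Step 1 (mountain pass for $(\mc P_{\e,\la})$, uniform $H_0^1$ and $L^\infty$ bounds) and the first part of Step 2 (weak limit, HLS plus dominated convergence for the Choquard term) agree in outline with the paper's argument. The genuine gap is your treatment of the singular term: you propose to obtain a uniform positive lower bound $u_\e \geq c_K > 0$ on compacts via a subsolution comparison, asserting that on $\{u_\e<1\}$ the Choquard source dominates $|l_\e(u_\e)|$ from below. This has the competition backwards: by $(f_3)$ the Choquard forcing behaves like $\la\,C\,u_\e^{r_0}$ as $u_\e\to 0^+$ and thus vanishes, whereas $|l_\e(u_\e)|\sim u_\e^{-\ba}|\log u_\e|\to\infty$. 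So near $\{u_\e=0\}$ the absorption term overwhelms the source, and no sub/supersolution comparison can yield an $\e$-independent interior lower bound. In fact such a bound is not even expected to hold: the weak formulation is written with $\chi_{\{u>0\}}$ precisely because the limit $u$ may vanish on a set of positive measure (a ``dead core''), and the introduction warns that ``maximum principle fails due to the absorption term.'' Your two-region sub/supersolution gluing therefore cannot be carried through.

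You also misidentify what ``pointwise gradient estimate'' in the abstract refers to. It is not a positivity bound but a genuine bound on $|\na u_\e|$ of the form
\begin{equation*}
\psi(x)\,|\na u_\e(x)|^2 \leq K\, Z(u_\e(x)),\qquad x\in\Om,
\end{equation*}
with $K$ independent of $\e$, where $\psi>0$ vanishes on $\pa\Om$ and $Z(t)\sim t^{1-\ba}\bigl(\tfrac{1}{(1-\ba)^2}-\tfrac{\log t}{1-\ba}\bigr)\to 0$ as $t\to 0^+$. This estimate forces $|\na u_\e|$ to degenerate near the (approximate) zero set, and is exactly what makes the passage to the limit work without positivity: one tests the perturbed equation with $\varphi\,\eta(u_\e/m)$ where $\eta$ cuts off the region $\{u_\e\leq m\}$, uses the gradient bound to control the commutator term $\I{\Om}\frac{|\na u_\e|^2}{m}\eta'(u_\e/m)\varphi$ (showing it tends to $0$ as $\e\to 0$ and then $m\to 0$, since $Z(t)/t\sim t^{-\ba}|\log t|$ is locally integrable near $\{u>0\}\cap\{u\text{ small}\}$ by an auxiliary lemma), and then lets $m\to 0$. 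The proof of the gradient estimate itself is a Bernstein-type argument: one sets $v=\psi\,|\na u_\e|^2/Z_a(u_\e)$ (with $Z_a=Z+a$) and shows $\De v>0$ at an interior maximum of $v$ if $\sup v$ were too large, which requires concavity of $Z_a$ and the matching inequalities \eqref{3.2}--\eqref{3.6}; this is unrelated to maximum-principle comparisons. To repair your proposal you would need to replace the positivity/subsolution step by such a Bernstein-type pointwise gradient bound, or by some other $\e$-uniform control of $l_\e(u_\e)$ near $\{u_\e\text{ small}\}$ that does not rely on $u_\e$ being bounded away from $0$.
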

We employ a variational approach to solve $(\mc P_\la)$. More precisely, we first study the following approximated problem:
\begin{equation*}
(\mc P_{\e,\la}) \left\{	-\De u + l_\e(u) =\la \left(\I{\Om}\frac{F(u(y))}{|x-y|^\mu}dy\right) f(u(x)),~\text{in}~\Om,~u \geq 0,~\text{in}~\Om,~u=0~\text{on}~\pa \Om,\right.
\end{equation*}
where \begin{equation*}
l_{\epsilon}(t) = \begin{cases}
	\ds-\frac{t^q}{(t+\e)^{\ba+q}}\log \left(t +\frac{\epsilon}{t +\epsilon}\right),&~t\geq 0,\\
	0,&t <0,
\end{cases}
\end{equation*}
and $0<q<1$ is such that $q <r_0-1$. The energy functional associated to the  problem $({\mc P_{\epsilon,\la}})$ is given by
\begin{equation}\label{eq2.1}
J_{\epsilon,\la}(u)=\frac{\|u\|^2}{2}+\int\limits_{\Om} L_{\epsilon}dx-\frac{\la}{2}\I{\Om}\I{\Om}\frac{F(u(y))}{|x-y|^\mu}F(u(x))dydx,~\forall\,u \in H_0^{1}(\Om),
\end{equation}
where $\ds L_{\epsilon}(t) = \int\limits_{0}^{t}l_\epsilon(s)ds$ and $\ds F(t)= \int\limits_{0}^{t}f(s)ds$. In this matter we shall discuss about estimates of $J_{\e,\la}$ independently of $\e$ in Section \ref{P}. From the geometry of $J_{\e,\la}$ and using variational arguments, we prove the existence of a nontrivial solution, $u_\e$,  to $(\mc P_{\e,\la})$. Next from a priori estimates and a key point pointwise gradient estimate (in the spirit of works \cite{FMS, LM, S}, see Section 4) independent of $\e$, we show that $u_\e$ converges as $\epsilon\to 0^+$ to a nontrivial solution of $(\mc P_\la)$.

For reader's convenience, we now present a brief introduction to existence and multiplicity results for the equations involving singular and Choquard nonlinearities.
The equations involving Choquard nonlinearity were introduced by Ph. Choquard in 1976 in the modeling of one-component plasma. Choquard equations are also used to study the model of the polaron, where free electrons in an ionic lattice interact with photons associated to deformations of the lattice or with the polarisation that it creates on the medium. For further details, see for instance \cite {HF,Li,Pe}. These applications lead to a vast investigation of nonlocal equations involving Choquard nonlinearity. Without any attempt to provide the complete list, we refer to  \cite{GY,MS4} and references therein for a large overview of current results concerning Choquard problems.\\
In 1996, Joao Marcos Do \'{O} in \cite{M} studied the following problem
\begin{equation*}
\left\{	-\De_n u = f(x,u) ~\text{in}~\Om, ~u\geq 0 ~\text{in}~\Om,\right.
\end{equation*}
where $\Om$ is a smooth bounded domain in $ \mb R^n$ with $n \geq 2$ and the growth of the nonlinearity $f(x,u)$ is as $\exp(\al |u|^{n/(n-1)})$ when $|u| \ra \infty$. The author showed the existence of a nontrivial solution under suitable technical assumptions on the nonlinearity $f$. We also refer to \cite{FOZ, FMR, R} for related results on critical elliptic equations and systems in two dimension. R. Arora and et. al. \cite{AGMS} extended the study to the following Kirchoff equation with exponential nonlinearity of Choquard type 
\begin{equation}\label{r}
\left\{ -m\left(\I{\Om}|\na u|^ndx\right)\De_n u=\left(\I{\Om}\frac{F(y,u)dy}{|x-y|^\mu}f(x,u)dx\right),~u >0~\text{in}~\Om,~u=0~\text{on}~\pa \Om,\right.
\end{equation}
where $\mu \in  (0,n)$, $\Om$ is a smooth bounded domain in $\mb R^n$, $n \geq 2$, $m: \mb R^+\ra \mb R^+$ and $f:\Om \times \mb R \ra \mb R$ are continuous functions satisfying suitable assumptions. 
Using variational techniques in the light of Trudinger-Moser inequality, the authors showed the existence of a weak solution to \eqref{r}.\\
We refer to the works \cite{CE, DGN, EG, GR} involving log type singular nonlinearities. Recent works in connection to the problem $(\mc P_\la)$ have investigated problems of the type 
\begin{equation}\label{1.5}
\left\{	-\De u= (g(u) +\la h(x,u))\chi_{\{ u>0\}}~\text{in}~\Om,~u=0~\text{on}~\pa\Om,\right.
\end{equation}
where $\Om$ is a smooth bounded domain in $\mb R^n$, $n \geq 2 $, $g$ is a singular function either of the type $(a)$ $-t^{-\ba}$, $0 <\ba<1$ or of the type $(b)$ $\log t$, $t>0$ and $h$ is a regular function having different growth rates. Equation \eqref{1.5} with $g(t)$ having type $(a)$ singularity and $h(x,t)=t^p$ with $0<p<(n+2)/(n-2)$, $n \geq 3$ was studied in \cite{DM1,DM2,SY}. Moreover, \cite{LM,MQ} studied \eqref{1.5} with $g(t)$ of type $(b)$ and $h$ as above in dimension $n \geq 3$. When $n=2$, problems with nonlinearities of exponential growth have been considered. In this direction, the case of $g$ of type $(b)$ and $h$ to be of exponential growth was studied in \cite{FMS}. In this paper, the authors showed the existence of a weak solution under suitable assumptions on $h$ and for small values of $\la$. Lastly the case of $g$ of type $(a)$ and $h$ to be of exponential growth was handled in \cite{S}. Precisely, the author considered the following problem
\begin{equation*}
\left\{-\De u = -u^{-\ba}\chi_{\{ u>0\}} +\la u^p +\mu f(u)~\text{in}~\Om,~u \not \equiv 0~\text{in}~\Om,~u=0 ~\text{on}~\pa \Om,\right.
\end{equation*}
where $p>0$ and showed that the above problem has a nonnegative solution for $\la \geq 0$ when the parameter $\mu >0$ is small.\\
Motivated by the above discussion, we aim to study the nonlocal problem $(\mc P_\la)$. 
In the present paper, in frame of $(\mc P_\lambda)$ we focus on the interaction between an absorption singular nonlinearity and a nonlocal term of Hartree type. Precisely we considered in the right hand side of the equation the competition between combined singularities of types $(a)$ and $(b)$ with exponential nonlinearity of Choquard type. Up to our knowledge, this interaction has not been investigated in previous contributions and can not be tackled by using monotone methods since maximum principle fails due to the absorption term. Therefore, we first introduce the approximated problem $(\mc P_{\e,\la})$ and prove that it admits a weak solution for suitable values of $\e$ and $\la$ using the Mountain pass theorem. Moreover we prove uniform estimates independently of $\e$. Precisely, these solutions are shown to be uniformly bounded in $L^\infty(\Om)$ and $H_0^{1}(\Om)$ independently of $\e$. To this aim, we use an original argument providing the uniform boundedness of Palais-Smale sequences independently of $\e$. The uniformly bounded sequence of approximated solutions then converges weakly as $\e \ra 0^+$ to a nontrivial function. The crucial final step to prove that the limit function is the required nontrivial weak solution of $(\mc P_\la)$ appeals a delicate uniform gradient estimate of the approximated solutions (see Lemma \ref{l3.1} in Section \ref{GE}) recalling some seminal ideas used in \cite{Ph} for singular heat equations.\\
\textbf{Structure of the paper:} In Section \ref{P} we state and prove some preliminary estimates. In Section \ref{suap} we obtain weak solutions $u_\e$ of the approximated problem $(\mc P_{\e,\la})$ using the Mountain Pass theorem. These solutions are shown to be bounded in $H_0^{1}(\Om)$ by a constant $M$ that does not depend upon $\e$. Using this, we also obtain a priori pointwise estimates for solutions $u_\e$. We complete this section by showing that solutions $u_\e$ converge weakly in $H_0^{1}(\Om)$ to a nontrivial function $u$. In Section \ref{GE} we establish the gradient estimate for solutions $u_\e$. In Section \ref{las} we prove Theorem \ref{tmr}.\\
\textbf{Notations:} The following notations will be used throughout the paper.
\begin{itemize}
\item The energy norm of the space $H_0^{1}(\Om)$ is denoted by $\| \cdot \|$.
\item $| \cdot |_p$ will denote the usual norm of $L^p(\Om)$ space.
\item $C$ will denote generic constant that may vary from line to line.
\end{itemize}

		\section{Preliminaries}\label{P}
		\setcounter{equation}{0}
		\noi The study of elliptic equations with exponential growth nonlinearities
		are motivated by the following Trudinger-Moser inequality, namely
		
		\begin{Theorem}\label{t2.1}
			We have, 
			\begin{equation*}
				\exp\left(\al w^{2}\right)\in L^1(\Om)~\text{ for every}~ w \in H^{1}_0(\Om)~ \text{and}~ \al > 0,
			\end{equation*}
			
			and there is a constant $k_1 > 0$ such that
			\begin{equation}\label{eq2.3}
				\sup\limits_{\|w\|\leq 1}\int\limits_{\Om}\exp\left(\al w^{2}\right)\leq k_1,~\text {for every} ~\al \leq 4\pi~\text{and}~ w\in H^{1}_0(\Om).		
			\end{equation}
		\end{Theorem}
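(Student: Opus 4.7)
The plan is to establish the classical Trudinger--Moser inequality in two dimensions, following Moser's original strategy which proceeds via symmetric decreasing rearrangement followed by a one-dimensional reduction. I would separate the two assertions, since the qualitative statement $\exp(\alpha w^2)\in L^1(\Omega)$ for all $\alpha>0$ and $w\in H_0^1(\Omega)$ actually follows from the supremum bound once it is established for $\alpha=4\pi$ and unit-norm $w$, by scaling and normalising: writing $w=\|w\|\,v$ with $\|v\|\le 1$ and choosing $\alpha'=\alpha\|w\|^2/(4\pi)\cdot 4\pi$, one reduces to a fixed exponent bounded by $4\pi$ after observing that $L^1$ membership only requires any finite uniform bound, not the sharp one. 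So the real work is proving \eqref{eq2.3}.

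First I would reduce to radial decreasing functions on a ball. Using Schwarz symmetrisation, let $w^\ast$ denote the symmetric decreasing rearrangement of $|w|$, defined on the ball $B_R\subset\R^2$ with $|B_R|=|\Omega|$. Then equimeasurability gives $\int_\Omega\exp(\alpha w^2)=\int_{B_R}\exp(\alpha(w^\ast)^2)$, while the P\'olya--Szeg\H{o} inequality gives $\|w^\ast\|_{H_0^1(B_R)}\le\|w\|_{H_0^1(\Omega)}\le 1$. So it suffices to bound the integral for radial, non-increasing, zero-boundary functions on $B_R$.

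Next I would perform Moser's change of variables. For such a radial $w(r)$ with $w(R)=0$ and $\int_0^R |w'(r)|^2\,r\,dr\le\frac{1}{2\pi}$, set $t=2\log(R/r)$, so $r=R e^{-t/2}$ and $t\in[0,\infty)$, and define $\phi(t)=\sqrt{4\pi}\,w(r)$. A direct computation turns the Dirichlet constraint into $\phi(0)=0$ and $\int_0^\infty|\phi'(t)|^2\,dt\le 1$, while (using $|B_R|=\pi R^2$)
\begin{equation*}
\int_{B_R}\exp(4\pi w^2)\,dx = \pi R^2\int_0^\infty \exp\bigl(\phi(t)^2-t\bigr)\,dt.
\end{equation*}
The heart of the proof is then the one-dimensional Moser lemma: there exists an absolute constant $c_0$ such that for every absolutely continuous $\phi$ on $[0,\infty)$ with $\phi(0)=0$ and $\int_0^\infty|\phi'|^2\le 1$, we have $\int_0^\infty\exp(\phi^2-t)\,dt\le c_0$. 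I would prove this lemma by splitting $[0,\infty)$ into the set where $\phi(t)^2\le t$, where the integrand is bounded by $1$ and the integral over $\{t:\phi(t)^2\le t\}$ poses no issue after estimating the set where this fails; more precisely, one uses Cauchy--Schwarz to get $|\phi(t)|\le\sqrt{t}$ whenever $\int_0^t|\phi'|^2\le 1$, so the complementary set is controlled by how rapidly $\int_0^t|\phi'|^2$ approaches $1$. A careful estimate using the distribution function of $\phi^2-t$ yields the bound, uniform in $\phi$, with $c_0$ independent of everything.

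The principal obstacle, and the reason Moser's argument is subtle, is the sharpness of the exponent $4\pi$: any constant strictly greater than $4\pi$ makes the one-dimensional lemma fail because one can concentrate $\phi$ to produce arbitrarily large integrals. All the delicate work is packed into the proof of the one-dimensional lemma and in verifying that the exponent $4\pi$ comes out exactly right from the change of variables, which in turn depends on the precise two-dimensional geometry (the factor $2\pi$ in the volume element in polar coordinates combined with the logarithmic conformal map $r\mapsto 2\log(R/r)$). Once these pieces are in place, \eqref{eq2.3} follows by tracing back through the rearrangement, and the qualitative $L^1$ statement for arbitrary $w\in H_0^1(\Omega)$ and arbitrary $\alpha>0$ is obtained by density of $C_c^\infty(\Omega)$ and the scaling argument outlined above.
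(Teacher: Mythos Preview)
The paper does not prove this theorem at all: it is stated without proof in Section~\ref{P} as the classical Trudinger--Moser inequality, serving as a foundational tool invoked throughout the rest of the analysis. So there is no ``paper's own proof'' to compare against; your outline supplies what the authors simply cite.

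Your sketch of Moser's original argument is structurally correct: the P\'olya--Szeg\H{o} reduction to radial functions on a ball, the logarithmic change of variables $t=2\log(R/r)$ with $\phi=\sqrt{4\pi}\,w$, and the reduction to the one-dimensional lemma are all accurate, including the computation that the Dirichlet constraint becomes $\int_0^\infty|\phi'|^2\le 1$ and the functional becomes $\pi R^2\int_0^\infty\exp(\phi^2-t)\,dt$. One small gap: your derivation of the qualitative statement from the quantitative one is not quite right. The scaling you describe only works when $\alpha\|w\|^2\le 4\pi$; for larger values the supremum in \eqref{eq2.3} is genuinely infinite and no rescaling saves you. The standard fix is to pick $w_1\in C_c^\infty(\Omega)$ with $\|w-w_1\|$ small enough that $4\alpha\|w-w_1\|^2\le 4\pi$, then use $w^2\le 2w_1^2+2(w-w_1)^2$ together with H\"older's inequality and the boundedness of $w_1$ to conclude $\exp(\alpha w^2)\in L^1(\Omega)$. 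Your mention of density at the end hints at this, but the scaling sentence as written is misleading. The one-dimensional Moser lemma is also left as a black box (``a careful estimate\ldots yields the bound''); since the Cauchy--Schwarz bound $|\phi(t)|\le\sqrt{t}$ only gives $\exp(\phi^2-t)\le 1$, which is not integrable on $[0,\infty)$, the actual proof requires a sharper splitting argument that you have not supplied.
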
	
		The embedding $H_0^{1}(\Om)\ni u \mapsto \exp\left(|u|^\beta\right)\in L^1(\Om)$ is compact for all $\beta	\in [1,2)$ and is continuous for $\beta =2$. Consequently the map $T : H_0^{1}(\Om) \ra L^q(\Om)$, for $q \in [1, \infty),$ defined by 
		$	T(u) := \exp \left(|u|^{\beta}\right)$, is continuous with respect to the norm topology, for any $\beta   \leq 2$.\\
		The following result due to Lions \cite[Section 1.7, Remark I.18]{L} will play an important role.
		\begin{Lemma}\label{lL}
			Let $\{u_k\}$ be a sequence of functions in $H_0^{1}(\Om)$ with $\|u_k \|=1$ such that $u_k \rightharpoonup u \ne 0$ weakly in $H_0^{1}(\Om)$. Then for every 
			\begin{equation*}
				0<p <\frac{4\pi}{(1-\|u\|^2)}
			\end{equation*}
			we have
			\begin{equation*}
				\sup_k\I{\Om}\exp\left(pu_k^{2}\right)<a_2,~\text{for some constant}~a_2~\text{independent of}~k.
			\end{equation*}
		\end{Lemma}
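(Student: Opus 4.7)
The natural first move is to set $v_k := u_k - u$ so that $v_k \rightharpoonup 0$ weakly in $H_0^{1}(\Om)$, and to record the standard identity
\[
\|v_k\|^2 = \|u_k\|^2 - 2\langle u_k,u\rangle + \|u\|^2 = 1-\|u\|^2 + o(1),
\]
so that $\|v_k\|^2 \ra 1-\|u\|^2 <1$. This reduces matters to bounding an exponential of the perturbed function $v_k+u$ in terms of one whose $H_0^1$-norm stays strictly below $1$, leaving us slack to invoke Theorem \ref{t2.1} with exponent less than $4\pi$.

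\textbf{Splitting via Young and H\"older.} For any $\de>0$, the elementary inequality $(a+b)^2 \leq (1+\de)a^2 + (1+1/\de)b^2$ gives $u_k^2 \leq (1+\de) v_k^2 + (1+1/\de) u^2$, and therefore for conjugate exponents $r,r'>1$ (with $1/r+1/r'=1$),
\[
\I{\Om}\exp\bigl(p u_k^2\bigr) \leq \left(\I{\Om}\exp\bigl(p(1+\de)r\, v_k^2\bigr)\right)^{1/r}\left(\I{\Om}\exp\bigl(p(1+1/\de)r'\, u^2\bigr)\right)^{1/r'}.
\]
The second factor is a fixed finite number: it depends only on the fixed element $u \in H_0^1(\Om)$, so Theorem \ref{t2.1} guarantees its finiteness for every exponent.

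\textbf{Parameter calibration and conclusion.} Since $p(1-\|u\|^2)<4\pi$ by hypothesis, one can pick $\de>0$ small and $r>1$ close to $1$ such that $p(1+\de)r(1-\|u\|^2)<4\pi$. Writing $v_k = \|v_k\|\tilde v_k$ with $\|\tilde v_k\|=1$, the first factor becomes $\int_{\Om}\exp(\al_k \tilde v_k^2)$ with $\al_k := p(1+\de)r\|v_k\|^2 \ra p(1+\de)r(1-\|u\|^2) <4\pi$. Hence for $k$ large, $\al_k \leq \al_0$ for some fixed $\al_0 < 4\pi$, and the uniform Trudinger-Moser bound \eqref{eq2.3} delivers a uniform control; the finitely many remaining indices are controlled individually since each $u_k \in H_0^1(\Om)$. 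The only genuine obstacle is this calibration, which must exploit the \emph{strict} inequality in the hypothesis on $p$ twice: once to absorb the Young/H\"older slack $\de,\,r-1>0$, and once to absorb the $o(1)$ coming from $\|v_k\|^2 \ra 1-\|u\|^2$.
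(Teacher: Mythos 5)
The paper does not prove this lemma; it quotes it directly from Lions \cite[Section 1.7, Remark I.18]{L}, so there is no in-paper argument to compare against. Your proof reproduces the standard argument correctly: the decomposition $u_k = v_k + u$ with $\|v_k\|^2 \to 1-\|u\|^2$, the elementary inequality $(a+b)^2 \le (1+\delta)a^2 + (1+1/\delta)b^2$ followed by H\"older, and a calibration of $\delta>0$ and $r>1$ exploiting the strict inequality $p(1-\|u\|^2) < 4\pi$ together with the convergence $\|v_k\|^2 \to 1-\|u\|^2$, so that the $v_k$-factor is controlled uniformly for large $k$ by \eqref{eq2.3} and the finitely many remaining indices are handled by the first assertion of Theorem \ref{t2.1}. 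One degenerate case worth a sentence: if $v_k = 0$ the normalization $\tilde v_k$ is undefined, but then $u_k = u$ and $\int_\Omega \exp(pu_k^2)$ is a fixed finite number, so nothing is lost. The reasoning is complete.
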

		Now note that using hypothesis $(f_2)$, we can conclude that for any $\alpha>0$ there exists a constant $C>0$ depending on $\al$ such that
		\begin{equation}\label{e2.3}
			\max \{|f(t),|F(t)|\} \leq C \exp\left(\al t^{2}\right),~\text{for}~t \in \mb{R}.
		\end{equation}
		For any $u \in H_0^{1}(\Om)$, by virtue of Sobolev embedding theorem, we get that $u \in L^q(\Om)$ for all $q \in [1,\infty)$. Moreover using Trudinger-Moser inequality and \eqref{e2.3}, we get 
		\begin{equation}\label{e2.4}
		f(u), F(u)\in L^q(\Om)~\text{for any} ~q\geq 1.
		\end{equation}
		Now we recall the well known Hardy-Littlewood-Sobolev inequality
		\begin{Proposition}\label{hls}
			\cite{hls}\textbf{Hardy-Littlewood-Sobolev inequality} Let $r, s>1$ and $0<\mu<2$ with $1/r+1/s+\mu/2=2$, $f\in L^{r}(\mb R^2), g \in L^s(\mb R^2)$. Then, there exist a sharp constant $C(r,s,\mu)$ independent of $f$ and $g$ such  that 
			\begin{equation}\label{hlse}
				\int\limits_{\mb R^2}\int\limits_{\mb R^2}\frac{f(x)g(y)}{|x-y|^{\mu}}dxdy \leq C(r,s,\mu) |f|_r|g|_s.
			\end{equation}\QED
		\end{Proposition}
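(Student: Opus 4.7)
The plan is to derive \eqref{hlse} via duality combined with the classical Hedberg interpolation trick. By duality, the bilinear inequality is equivalent to the boundedness of the Riesz-type convolution operator
\begin{equation*}
T_\mu g(x) := \int_{\mb R^2} \frac{g(y)}{|x-y|^\mu}\, dy
\end{equation*}
from $L^s(\mb R^2)$ to $L^{r'}(\mb R^2)$, where $r'$ is the H\"older conjugate of $r$. The scaling hypothesis $1/r + 1/s + \mu/2 = 2$ becomes the exponent relation $1/r' = 1/s + \mu/2 - 1$, which is precisely the unique value forced by the dilation symmetry $g \mapsto g(\lambda \cdot)$ of $T_\mu$, and confirms that the statement is dimensionally consistent.

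The pointwise bound driving the proof comes from splitting the kernel at a free radius $R > 0$. For the near piece, a dyadic decomposition over the annuli $\{2^{-k-1}R < |x-y| \leq 2^{-k}R\}$, combined with the definition of the Hardy--Littlewood maximal function $Mg$, yields
\begin{equation*}
\int_{|x-y|\leq R}\frac{|g(y)|}{|x-y|^\mu}\,dy \;\leq\; C\, R^{2-\mu}\, Mg(x).
\end{equation*}
For the far piece, H\"older's inequality (using $\mu s' > 2$, which is equivalent to $1/s + \mu/2 > 1$, guaranteed by $r' > 1$) gives
\begin{equation*}
\int_{|x-y|>R}\frac{|g(y)|}{|x-y|^\mu}\,dy \;\leq\; C\, R^{2/s'-\mu}\, |g|_s.
\end{equation*}
Balancing the two contributions in $R$ produces the interpolation estimate $|T_\mu g(x)| \leq C\, (Mg(x))^{s/r'}\, |g|_s^{1 - s/r'}$. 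Raising to the $r'$-th power, integrating over $\mb R^2$, and invoking the $L^s$-boundedness of $M$ (which holds since $s>1$) yields $|T_\mu g|_{r'} \leq C\, |g|_s$, and duality closes the argument.

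The main obstacle, if one insists on the \emph{sharp} constant $C(r,s,\mu)$ announced in the statement, is that the Hedberg--Marcinkiewicz scheme above only provides some admissible constant. Obtaining the best constant demands a different strategy: first, symmetric decreasing rearrangement via the Riesz rearrangement inequality reduces the problem to radial nonincreasing $f,g$; then Lieb's conformal-invariance and competing-symmetries machinery identifies the extremizers (of the form $(c+|x|^2)^{-\alpha}$ for appropriate $\alpha$) and pins down the exact value of $C(r,s,\mu)$. For the analytic applications pursued in the present paper, however, only the qualitative boundedness is used, so the elementary interpolation proof suffices and sharpness can be invoked as a black box from the reference \cite{hls}.
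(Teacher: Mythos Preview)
Your argument is correct: the Hedberg pointwise interpolation via the maximal function, followed by the Hardy--Littlewood maximal theorem, is a standard and complete route to the non-sharp inequality, and you have also accurately flagged that sharpness requires Lieb's rearrangement and competing-symmetries machinery rather than the interpolation scheme.

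However, the paper itself gives no proof of this proposition at all. The statement is simply quoted from the reference \cite{hls} (Lieb--Loss, \emph{Analysis}) and closed with a \QED\ immediately after the displayed inequality; it functions as a black-box tool invoked later (e.g.\ to control the Choquard term in \eqref{e2.42} and in the Palais--Smale analysis). So there is nothing in the paper's text to compare your approach against---you have supplied a proof where the authors deliberately defer to the literature. Your write-up is therefore strictly more than what the paper does, and the final paragraph of your proposal already anticipates this: for the applications in the paper only the qualitative bound matters, and the citation suffices.
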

		By taking $r=s=4/(4-\mu)$ in Proposition \ref{hls} and using \eqref{e2.4}, we get that $J_{\e,\la}$ as defined in \eqref{eq2.1} is well defined in $H_0^{1}(\Om)$. Also $J_{\e,\la} \in C^1(H_0^{1}(\Om),\mb R)$. Naturally the critical points of $J_{\e,\la}$ corresponds to weak solutions of $\mc (P_{\e,\la})$ and for any $u \in H_0^{1}(\Om)$
		\begin{equation*}
			\ld J_{\e,\la}^{\prime}(u),\varphi\rd = \I{\Om}\na u \na \varphi dx + \I{\Om}l_\e(u)\varphi dx -\I{\Om}\left( \I{\Om}\frac{F(u(y))}{|x-y|^\mu}dy\right)f(u(x))\varphi(x)dx,
		\end{equation*}
		for every $\varphi \in H_0^{1}(\Om)$. We will need the following estimates of $l_\e$.
\begin{Proposition} The following assertions hold.
	\begin{enumerate}
		\item There exists $\tilde m>0$ such that
		\begin{equation}\label{2.6}
			0\leq- l_\e(t)\leq t~ \text{for all} ~t\geq 1-\e~\text{and}~0<\e <1/2,
		\end{equation}
		\begin{equation}\label{2.7}
		0<	L_\e(t)< \widetilde{m}~\text{for any}~0\leq t\leq 1-\e~\text{and}~0<\e<1,
		\end{equation}
	and
	\begin{equation}\label{2.8}
		|L_\e(t)|\leq \frac{t^{2-\ba}}{2-\ba}+\frac{t^{1-\ba}}{1-\ba}+\widetilde{m}.
	\end{equation}
\item  For each $p_0 >2$ there exists a constant $ k_0$ such that
\begin{equation}\label{2.9}
	L_\e (t) \geq -k_0t^{p_0}~\text{for all}~t \geq 1-\e,~0\leq \e \leq 1/2.
\end{equation}
\item There exists a constant $C>0$ that does not depend upon $0<\e<1$ such that
\begin{equation}\label{2.10}
	|t l_\e(t)| \leq C(1+t^{2-\ba})~\text{for all}~ t>0.
\end{equation}
\item There exist constants $\e_0 >0$ and $\de_0>0 $ (that is independent of $\e>0$) such that
\begin{equation}\label{2.11}
	-\log\left(t+\frac{\e}{t+\e}\right) \geq t~\text{for all}~0\leq t<{\de_0}~\text{and}~0<\e<\e_0.
\end{equation}
	\end{enumerate}
\end{Proposition}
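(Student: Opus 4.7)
The four estimates all reduce to a careful analysis of the function $g_\e(t) := t + \frac{\e}{t+\e}$, which is the argument of the logarithm in $l_\e(t)$. A one-line calculation gives $g_\e(0) = g_\e(1-\e) = 1$ and shows that $g_\e$ attains its global minimum $2\sqrt{\e}-\e$ at $t^\ast = \sqrt{\e}-\e$. Hence $g_\e(t) \leq 1$ on $[0, 1-\e]$ and $g_\e(t) \geq 1$ on $[1-\e, \infty)$, so $l_\e$ is nonnegative on the first interval and nonpositive on the second. The plan is to record this sign information together with the elementary pointwise bounds $g_\e(s) \geq s$ and $g_\e(s) \geq \sqrt{\e}$ (both valid for $\e \in (0,1]$), and then handle each estimate by case analysis on $t$.

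For \eqref{2.6}, nonnegativity of $-l_\e$ on $[1-\e,\infty)$ is immediate from the sign observation. For the upper bound, applying $\log x \leq x-1$ gives $-l_\e(t) \leq t^{q+1}(t+\e-1)/(t+\e)^{\ba+q+1}$; the chain $t \leq t+\e$, $t+\e-1 \leq t+\e$, and $(t+\e)^{q+1} \leq (t+\e)^{q+1+\ba}$ (valid since $t+\e \geq 1$) then collapses this to $-l_\e(t) \leq t$. For \eqref{2.7}, the sign of $l_\e$ on $[0,1-\e]$ gives $L_\e \geq 0$ there; using $g_\e(s) \geq s$ to bound $-\log g_\e(s) \leq -\log s$, together with $(s+\e)^{\ba+q} \geq s^{\ba+q}$, yields $|l_\e(s)| \leq -s^{-\ba}\log s$, so the choice $\widetilde m := -\int_0^1 s^{-\ba}\log s\,ds$ (finite because $\ba < 1$) furnishes the uniform upper bound.

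Estimate \eqref{2.8} follows by splitting $\int_0^t$ at $1-\e$ and combining \eqref{2.7} on $[0,1-\e]$ with an integrated form of \eqref{2.6} on $[1-\e,t]$; \eqref{2.9} is analogous, exploiting $L_\e(1-\e) \geq 0$ and the fact that for $p_0 > 2$ and $t \geq 1-\e \geq 1/2$ the term $t^{2-\ba}$ is dominated by $t^{p_0}$ up to an absolute constant. Estimate \eqref{2.10} is the most delicate and requires a three-region split. For $t \geq 1-\e$, sharpening \eqref{2.6} via $\log g_\e(t) \leq \log(t+1) \leq t$ and $t^{q+1}/(t+\e)^{\ba+q} \leq t^{1-\ba}$ yields $t|l_\e(t)| \leq t^{2-\ba}$. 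For $t \in [\e, 1-\e]$, the bound $g_\e(t) \geq t$ gives $t|l_\e(t)| \leq t^{1-\ba}|\log t|$, which is bounded on $(0,1]$. The delicate region is $t \in [0,\e]$, where I would use $g_\e(t) \geq \sqrt{\e}$ to bound $|\log g_\e(t)| \leq \frac{1}{2}|\log\e|$ and $(t+\e)^{\ba+q} \geq \e^{\ba+q}$, so that $t|l_\e(t)| \leq \frac{1}{2}\e^{1-\ba}|\log\e|$; the uniform bound in $\e$ rests squarely on $\ba < 1$, which gives $\e^{1-\ba}|\log\e| \to 0$ as $\e \to 0^+$.

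Finally, for \eqref{2.11} I would apply $-\log x \geq 1-x$ to obtain $-\log g_\e(t) \geq 1 - g_\e(t) = t(1-t-\e)/(t+\e)$; choosing $\e_0 = \de_0 = 1/4$ forces $t + \e \leq 1/2$, so $(1-t-\e)/(t+\e) \geq 1$ and the claim follows. The main obstacle across the proposition is keeping all constants independent of $\e$ in the region where $t$ and $\e$ are comparable, since both the prefactor $(t+\e)^{-(\ba+q)}$ and $|\log g_\e(t)|$ can be large there; the saving grace is that these two singularities peak at different scales (respectively at $t=0$ and at $t=\sqrt\e-\e$), and the compatibility $\ba < 1$ ensures that their combined contribution stays uniformly bounded.
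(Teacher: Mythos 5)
Your proposal is correct and the overall structure — sign analysis of $l_\e$ via the auxiliary function $g_\e(t)=t+\frac{\e}{t+\e}$, case-splitting at $t=1-\e$, and elementary pointwise bounds — matches the paper's, but a few individual steps take genuinely different and in one case better routes. For \eqref{2.11} you use $-\log x\ge 1-x$ to get $-\log g_\e(t)\ge \frac{t(1-t-\e)}{t+\e}$ with explicit uniform constants $\e_0=\de_0=1/4$; the paper instead applies L'H\^opital's rule to compute $\lim_{t\to 0}t/(-\log g_\e(t))=\e/(1-\e)$ and then asserts a $\de_0$ can be chosen independently of $\e$, which is correct but leaves the uniformity of that choice implicit — your version makes the uniformity explicit and is the cleaner argument. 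For \eqref{2.6} you use $\log x\le x-1$ where the paper uses $\log g_\e(t)\le\log(t+\e)\le t$; both work. For \eqref{2.10} your third region $[0,\e]$ with the $g_\e\ge\sqrt\e$ bound is harmless but unnecessary: the paper handles all of $(0,1-\e)$ in one stroke via $g_\e(t)\ge t$, which gives $|tl_\e(t)|\le t^{1-\ba}|\log t|$, and this is bounded near $0$ precisely because $\ba<1$ — the same mechanism you invoke but without the extra split. The one place you fall short of the statement as written is \eqref{2.8}: integrating the bound from \eqref{2.6} over $[1-\e,t]$ gives $|L_\e(t)|\le\widetilde m+\frac{t^2}{2}$, which does not dominate the asserted right-hand side $\widetilde m+\frac{t^{2-\ba}}{2-\ba}+\frac{t^{1-\ba}}{1-\ba}$ for large $t$; to obtain the stated form one must instead use the sharper pointwise bound $|l_\e(s)|\le s^{-\ba}\log g_\e(s)\le s^{-\ba}g_\e(s)\le s^{1-\ba}+s^{-\ba}$ and integrate, as the paper does. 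That said, the weaker $\frac{t^2}{2}$ bound is entirely adequate for the only downstream use, namely the polynomial domination in \eqref{2.9}.
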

\begin{proof} By definition of $l_\e$, we have $l_\e(t)\geq 0$ for $0 \leq t \leq 1-\e$ and $l_\e(t)\leq 0$ for $t \geq 1-\e$. Note that for $0<\e<1/2$ and $t \geq 1-\e$, we have
	\begin{align*}
		-l_\e(t)=\frac{t^q}{(t+\e)^{q+\beta}}\log\left(t+\frac{\e}{t+\e}\right) \leq t(t+\e)^{-\ba-1}\log(t+\e)\leq t.
	\end{align*}
This proves assertion \eqref{2.6}. \\
 Now since $0<\ba<1$, we choose $ \de>0$ such that $0<\ba<\de<1$. Since $\lim_{t \ra 0} -t^{\de -\ba}\log(t)=0$, we can choose $m>0$ such that $-t^{\de-\ba}\log(t)<m$ for $0\leq t\leq 1$. Hence for $0\leq t \leq 1-\e$, we have
	\begin{align*}
		0\leq l_\e(t)\leq -(t+\e)^{-\ba}\log(t+\e)\leq m t^{-\de},
	\end{align*}
and so 
\begin{align*}
	0<L_\e(t)=\I{0}^{t}l_\e(s)ds\leq \frac{m}{1-\de}t^{1-\de}\leq \widetilde{m}~\text{for all}~0\leq t\leq 1-\e.
\end{align*}
This concludes assertion \eqref{2.7}. Inequality \eqref{2.8} also holds. Indeed using \eqref{2.7} and the fact that $\log t \leq t$ for all $t >0$, we have
\begin{align*}
	|L_\e(t)| \leq& \widetilde{m} +\I{1-\e}^{t}|l_\e(s)|ds \leq \widetilde{m} +\I{1-\e}^{t}s^{-\ba} \log\left(s+\frac{s}{s+\e}\right)\\
	\leq & \widetilde{m} +\I{1-\e}^{t}s^{-\ba}\left(s+\frac{\e}{s+\e}\right)ds \leq \widetilde{m} + \I{0}^{t}(s^{1-\ba}+ s^\ba)ds =\widetilde{m} +\frac{t^{2-\ba}}{2-\ba}+\frac{t^{1-\ba}}{1-\ba}.
\end{align*}
Note that for each $p_0>2$ there exists $k_0 >0$ such that 
\begin{equation*}
	\widetilde{m} +\frac{t^{2-\ba}}{2-\ba}+\frac{t^{1-\ba}}{1-\ba} \leq k_0 t^{p_0}~\text{for all} ~t \geq 1/2.
\end{equation*}
Thus, from \eqref{2.8} we obtain
\begin{align*}
L_\e(s)\geq - \left(\widetilde{m} +\frac{t^{2-\ba}}{2-\ba}+\frac{t^{1-\ba}}{1-\ba}\right) \geq -k_0t^{p_0}~\text{for all}~t\geq 1-\e~\text{and for every}~0<\e<1/2,
\end{align*}
proving \eqref{2.9}. Now we prove \eqref{2.10}. For each $ 0<\e<1$ and $0<t<1-\e$ there exists a constant $C>0$ independent of $\e$ such that
\begin{align*}
	|l_\e(t)t| \leq -t^{1-\ba}\log\left(t +\frac{\e}{t+\e}\right) \leq (-\log t)t^{1-\ba}\leq C.
\end{align*}
On the other hand for $t >1-\e$ we have for some $C>0$
\begin{align*}
	|l_\e(t)t| \leq t^{1-\ba}\log\left(t +\frac{\e}{t+\e}\right) \leq t^{1-\beta}\log(t+\e)\leq Ct^{2-\beta}.
\end{align*}
We conclude that there exists a constant $C>0$ such that 
\begin{equation*}
	|l_\e(s)s| \leq C(1+t^{2-\ba})~\text{for all}~s \geq 0~\text{and}~0<\e<1.
\end{equation*}
Lastly, we prove assertion \eqref{2.11}. A simple application of L'Hospital's rule shows that
\begin{equation*}
	\lim\limits_{t \ra 0} \frac{t}{-\log\left(t+\frac{\e}{t+\e}\right)} =\frac{\e}{1-\e} \leq \frac{1}{2}~\text{if}~0<\e<\e_0 =\frac{1}{3}.
\end{equation*}
Thus we can choose $\de_0$ independent of $0<\e<\e_0$  such that
\begin{equation*}
	 t \leq -\log\left(t+\frac{\e}{t+\e}\right) ~\text{if}~0\leq t <\de_0~\text{and}~0<\e<\e_0.
\end{equation*}
This completes the proof. \QED
	\end{proof}
	We shall require the following proposition many times in the course of this article.
\begin{Proposition}\label{p2.6}
	Assume that $f$ satisfies \eqref{e2.3} for some $C,\al>0$. If $\{u_k\}\subset H_0^{1}(\Om)$ is a sequence such that $\|u_k\|^{2}\leq\frac{2\pi}{\al}$ and $u_k \rightharpoonup u$ weakly in $H_0^{1}(\Om)$, then
	\begin{equation}\label{e2.15}
		\I{\Om}f(u_k)dx \ra \I{\Om}f(u)dx
	\end{equation}	
	and \begin{equation}\label{e2.16}
		\I{\Om}F(u_k)dx\ra \I{\Om}F(u)dx.
	\end{equation}
	In particular, if $f$ satisfies $(f_2)$, then the assertions \eqref{e2.15} and \eqref{e2.16} hold provided that there is a constant $M>0$ that does not depend upon $k$ such that $\|u_k\|<M$.
\end{Proposition}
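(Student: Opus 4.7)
The plan is to combine Rellich--Kondrachov compactness with a uniform $L^2$ bound on $\{f(u_k)\}$ obtained from Trudinger--Moser, and then invoke Vitali's convergence theorem. Since $\Om \subset \mb R^2$ is bounded, Rellich--Kondrachov gives $u_k \to u$ strongly in $L^2(\Om)$, so along a subsequence $u_k \to u$ a.e.\ in $\Om$; continuity of $f$ (and hence of $F$) then yields $f(u_k) \to f(u)$ and $F(u_k) \to F(u)$ a.e. The whole-sequence convergence will follow from the usual subsequence argument once integral convergence is established along each subsequence.

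For the uniform $L^2$ estimate I would exploit the hypothesis $\|u_k\|^2 \leq 2\pi/\al$. Setting $v_k := u_k/\|u_k\|$ (the case $u_k \equiv 0$ being trivial), we have $\|v_k\|=1$ and $2\al u_k^2 = 2\al\|u_k\|^2 v_k^2 \leq 4\pi v_k^2$, so Theorem \ref{t2.1} gives
\begin{equation*}
\sup_k \int_\Om \exp(2\al u_k^2)\, dx \leq \sup_k \int_\Om \exp(4\pi v_k^2)\, dx \leq k_1.
\end{equation*}
Combined with the pointwise bound \eqref{e2.3}, this yields $\sup_k \|f(u_k)\|_{L^2(\Om)} \leq C\sqrt{k_1}$, and Cauchy--Schwarz delivers the equi-integrability estimate $\int_A |f(u_k)|\,dx \leq C\sqrt{k_1}\,|A|^{1/2}$ for every measurable $A \subset \Om$. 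The same argument handles $F(u_k)$, since \eqref{e2.3} bounds $|F|$ by the same exponential. Vitali's theorem then produces \eqref{e2.15} and \eqref{e2.16}.

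The ``in particular'' clause requires one further observation: given only $\|u_k\| \leq M$, the freedom in $(f_2)$ allows us to invoke \eqref{e2.3} with any fixed $\al_0 < 2\pi/M^2$, so that $\|u_k\|^2 \leq M^2 < 2\pi/\al_0$ and the first part applies. The only mildly delicate point is that we use Trudinger--Moser exactly at the borderline exponent $4\pi$; however, Theorem \ref{t2.1} is stated with non-strict inequality $\al\leq 4\pi$, so the choice $p=2$ in the Lebesgue bound is admissible without any shrinking argument. Accordingly I do not anticipate any genuine obstacle beyond the clean assembly of a.e.\ convergence, the $L^2$-bound from Trudinger--Moser, and Vitali.
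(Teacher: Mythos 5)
Your argument is correct and is essentially the standard route that the paper (via its citation to Proposition 2 of \cite{FMS}) relies on: Trudinger--Moser at the endpoint gives a uniform $L^2$ bound on $f(u_k)$ and $F(u_k)$, Rellich--Kondrachov gives a.e.\ convergence along subsequences, and Vitali (equi-integrability plus a.e.\ convergence on the bounded domain) closes the argument, with the whole-sequence convergence following from the uniqueness of the limit. The handling of the ``in particular'' clause by choosing $\alpha_0 < 2\pi/M^2$ via the arbitrariness of $\alpha$ in $(f_2)$ and \eqref{e2.3} is exactly right.
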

\begin{proof}
	Proof follows similarly as the proof of Proposition $2$ in \cite{FMS}. \QED
\end{proof}
\section{Solution and uniform bounds of approximated problem} \label{suap}
		The following theorem is the main result concerning $ (\mc  P_{\e,\la})$ in this section:
		\begin{Theorem}\label{t2.2}
			Suppose $f$ satisfies $(f_2)-(f_5)$ and let $\e^\ast =\min\{\e_0,1/2\}$, where $\e_0$ is defined as in \eqref{2.11}. Then there exists a $\bar{\la} >0$ such that for each $0 <\la <\bar{\la}$ we have a nonnegative nontrivial solution $u_\e \in H_0^1(\Om)$ to ${(\mc P_{\epsilon,\la})}$ for every $0<\epsilon<\e^\ast$. Moreover there exists a constant $M>0$, independent of $\e\in(0,\epsilon^*)$ and $\lambda\in(0,\bar{\lambda})$ such that 
			\begin{equation}
				\|u_\e\| \leq M ~\text{and}~|u_\e|_{\infty}<M.
			\end{equation}
		\end{Theorem}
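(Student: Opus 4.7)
The strategy is to apply the Mountain Pass Theorem to $J_{\e,\la}$, extract a nonnegative critical point, and then upgrade to uniform $H^1_0$ and $L^\infty$ bounds independent of $\e\in(0,\e^*)$ and $\la\in(0,\bar\la)$.

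\textbf{Step 1 (Mountain Pass geometry, uniformly in $\e$).} First I would check that $J_{\e,\la}(0)=0$ and produce a small-norm barrier. Using $(f_3)$ together with $(f_2)$ and \eqref{e2.3} one has, for any chosen $\al>0$, $F(t)\le C(|t|^{r_0+1}+|t|\exp(\al t^2))$. With $r=s=4/(4-\mu)$ the Hardy--Littlewood--Sobolev inequality combined with the Trudinger--Moser bound \eqref{eq2.3} yields, for $\|u\|\le\rho$ small enough,
\[
\frac{\la}{2}\iint\frac{F(u(y))F(u(x))}{|x-y|^\mu}\,dy\,dx\;\le\;C\la\,\|u\|^{2(r_0+1)}+C\la\,o(\|u\|^2).
\]
Since $L_\e\ge 0$ on $[0,1-\e]$ and the contribution from where $L_\e<0$ is controlled by \eqref{2.9}, one obtains $J_{\e,\la}(u)\ge\frac{\rho^2}{4}$ on $\{\|u\|=\rho\}$ for $\la$ smaller than some $\bar\la>0$ independent of $\e$. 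For the ``over-the-hill'' point, I would fix a single $w\in C_c^\infty(\Om)$, $w\ge 0$, $w\not\equiv 0$, and use \eqref{1.3} (from $(f_5)$) together with \eqref{2.8} to bound $\int L_\e(tw)$ by a polynomial in $t$ of degree $2-\ba<2<2\ga$; then $J_{\e,\la}(tw)\to-\infty$ as $t\to\infty$ at a rate independent of $\e$, so one may take $e:=Tw$ for some large $T$ independent of $\e$. Crucially the MP level $c_\e:=\inf_{\gamma}\max_{t\in[0,1]}J_{\e,\la}(\gamma(t))$ is uniformly bounded by evaluating along the fixed path $t\mapsto te$ and using \eqref{2.8}.

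\textbf{Step 2 (Uniformly bounded Palais--Smale sequences).} Let $\{u_n\}\subset H^1_0(\Om)$ satisfy $J_{\e,\la}(u_n)\to c_\e$ and $J_{\e,\la}'(u_n)\to 0$. Forming $J_{\e,\la}(u_n)-\frac{1}{\theta}\langle J_{\e,\la}'(u_n),u_n\rangle$ with $\theta>2$ chosen from the Ambrosetti--Rabinowitz-type condition derivable from $(f_4)$--$(f_5)$ (so that $\theta F(t)\le tf(t)$ for $t$ large, whence $2F(t)\cdot F(s)\le\frac{2}{\theta}(tf(t)F(s)+sf(s)F(t))$ yields the nonlocal AR), the leading terms combine into $(\tfrac12-\tfrac1\theta)\|u_n\|^2$. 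The obstacle is the $l_\e$ contribution $\int (L_\e(u_n)-\tfrac{1}{\theta}u_n l_\e(u_n))$: using \eqref{2.8} and \eqref{2.10} this is dominated by $C(1+|u_n|_{2-\ba}^{2-\ba})$, which via interpolation is $o(\|u_n\|^2)+C$. This gives $\|u_n\|\le M$ with $M$ independent of $\e$ and $\la$, and standard arguments (splitting $u_n$ into its convergent strong $L^p$-part and the residual, using Proposition \ref{p2.6} once the threshold $\|u\|^2<2\pi/\al$ is ensured by taking $\la$ small so that $c_\e$ is small) give the PS condition.

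\textbf{Step 3 (Nonnegativity and nontriviality).} Testing $J_{\e,\la}'(u_\e)=0$ with $\varphi=u_\e^-$ and using $l_\e(t)=0$, $f(t)=0$ for $t\le 0$ yields $\|u_\e^-\|=0$, hence $u_\e\ge 0$. Nontriviality follows from $J_{\e,\la}(u_\e)=c_\e\ge\alpha>0$.

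\textbf{Step 4 (Uniform $L^\infty$ bound via Moser iteration).} With $\|u_\e\|\le M$, Trudinger--Moser (cf.\ \eqref{e2.4}) gives $f(u_\e)\in L^q(\Om)$ uniformly for every $q<\infty$; HLS then gives $\int_\Om F(u_\e(y))/|x-y|^\mu\,dy\in L^p(\Om)$ uniformly in $\e$ for every $p<\infty$. Testing $(\mc P_{\e,\la})$ with $u_\e^{2k+1}$ (or $(u_\e-K)_+^{2k+1}$) and discarding the favorable contribution $\int l_\e(u_\e)u_\e^{2k+1}$ on $\{u_\e\ge 1-\e\}$ (which has the good sign thanks to \eqref{2.6}) while absorbing the contribution on $\{u_\e<1-\e\}$ via \eqref{2.7}, a standard Moser/De Giorgi iteration driven by the $L^p$ bound of the Choquard kernel produces $|u_\e|_\infty\le M$ with $M$ independent of $\e$ and $\la$.

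The principal obstacle is \textbf{Step 2}: the functional $J_{\e,\la}$ does not satisfy a clean Ambrosetti--Rabinowitz identity uniformly in $\e$ because of $L_\e$, so the uniform $H^1_0$ bound has to be extracted by exploiting both \eqref{2.8} and \eqref{2.10} and absorbing the subcritical $|u_n|_{2-\ba}^{2-\ba}$ term into $\|u_n\|^2$ via Gagliardo--Nirenberg; this is the ``original argument'' flagged in the introduction.
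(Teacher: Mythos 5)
Your proposal takes essentially the same route as the paper: Mountain Pass geometry with constants uniform in $\e$, a modified Ambrosetti--Rabinowitz argument driven by $(f_4)$ in which the $L_\e$/$l_\e$ contributions are absorbed as sub-quadratic terms (the paper bounds them by $-C\|u_k\|-C$; your $C(1+|u_n|_{2-\beta}^{2-\beta})$ is equivalent in effect) to obtain $\e$-independent boundedness of Palais--Smale sequences, and an $L^\infty$ estimate from the uniform $L^\infty$ bound on $x\mapsto\int_\Om F(u_\e(y))|x-y|^{-\mu}\,dy$ combined with a Moser-type iteration (the paper first uses $(f_3)$ together with \eqref{2.11} and $q<r_0-1$ to show that $l_\e$ dominates $\la\widehat{M}f$ near $u=0$, reducing to the clean inequality $\int\nabla u_\e\nabla\psi\le C\int u_\e e^{\al u_\e^2}\psi$ before invoking Proposition~1 of [FMS]). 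One minor slip in your Step~4: the signs are reversed --- on $\{u_\e\ge 1-\e\}$ one has $l_\e\le 0$, which is the \emph{bad-sign} contribution that must be controlled via \eqref{2.6}, whereas on $\{u_\e<1-\e\}$ one has $l_\e\ge 0$ and that contribution is the favorable one that can simply be dropped --- but the tools you invoke are the right ones and the iteration closes regardless.
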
	
		
		\noi We prove Theorem \ref{t2.2} with the help of mountain pass lemma.\\
		First we show that $J_{\e,\la}$ satisfies the Palais-Smale condition at every nonzero level $c$.
		\begin{Lemma}\label{lb}
			Fix $0<\e<1$ and suppose $f$ satisfies $(f_2)$, $(f_4)$ and $(f_5)$. Then the functional $J_{\e,\la}$ satisfies the Palais-Smale condition at every level $c \ne 0$.
		\end{Lemma}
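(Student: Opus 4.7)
Let $\{u_k\}\subset H_0^1(\Om)$ be a Palais--Smale sequence for $J_{\e,\la}$ at level $c\ne 0$. My plan is to (i) prove $\{u_k\}$ is bounded in $H_0^1(\Om)$, (ii) extract a weak limit $u$ and show that both the $l_\e$--contribution and the Choquard interaction in $\langle J'_{\e,\la}(u_k),u_k-u\rangle$ tend to $0$, and (iii) conclude the strong convergence $u_k\to u$ in $H_0^1(\Om)$.

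For boundedness, $(f_4)$ yields (by integrating $f(s)/s^l$ on $[0,t]$) the usual inequality $(l+1)F(t)\le t f(t)$ for $t\ge 0$. I would form the combination $J_{\e,\la}(u_k)-\theta^{-1}\langle J'_{\e,\la}(u_k),u_k\rangle$ with $\theta\in(2,2(l+1))$, which is nonempty because $l>0$. Using the symmetry of the Hartree kernel, the Choquard contribution becomes
\[
\la\Big(\frac{l+1}{\theta}-\frac{1}{2}\Big)\int_\Om\int_\Om\frac{F(u_k(y))F(u_k(x))}{|x-y|^\mu}dy\,dx\ge 0.
\]
Moreover $L_\e(t)-\theta^{-1}t\,l_\e(t)$ is bounded in absolute value by $C(1+t^{2-\ba})$ uniformly in $\e$, by \eqref{2.8} and \eqref{2.10}, so Sobolev embedding gives an inequality of the form
\[
c+o(1)+o(1)\|u_k\|\ge \Big(\tfrac12-\tfrac1\theta\Big)\|u_k\|^2-C\bigl(1+\|u_k\|^{2-\ba}\bigr),
\]
which forces $\|u_k\|\le M$ for some $M>0$.

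Up to a subsequence $u_k\rightharpoonup u$ in $H_0^1(\Om)$ and $u_k\to u$ in $L^q(\Om)$ for every $q<\infty$. Plugging $\varphi=u_k-u$ into $\langle J'_{\e,\la}(u_k),\varphi\rangle\to 0$ gives
\[
\int_\Om\na u_k\cdot\na(u_k-u)\,dx=-\int_\Om l_\e(u_k)(u_k-u)\,dx+\la\int_\Om\int_\Om\frac{F(u_k(y))f(u_k(x))(u_k(x)-u(x))}{|x-y|^\mu}dy\,dx+o(1).
\]
For $\e$ fixed, $l_\e$ is continuous and bounded on $[0,\infty)$ (it vanishes at $0$ and decays like $t^{-\ba}\log t\to 0$ at infinity), so the first integral on the right tends to $0$ by dominated convergence. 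For the Choquard term I apply Hardy--Littlewood--Sobolev (Proposition \ref{hls}) with $r=s=4/(4-\mu)$, bounding it by $C|F(u_k)|_r\,|f(u_k)(u_k-u)|_s$, then split the second factor by H\"older as $|f(u_k)|_{p_1}|u_k-u|_{p_2}$ with $p_1$ finite but large and $p_2<\infty$. The uniform bound $\|u_k\|\le M$ combined with $(f_2)$ allows me to pick $\alpha>0$ small enough that Trudinger--Moser \eqref{eq2.3} yields $|F(u_k)|_r$ and $|f(u_k)|_{p_1}$ bounded, while $|u_k-u|_{p_2}\to 0$ by compact Sobolev embedding. Consequently $\int_\Om\na u_k\cdot\na(u_k-u)\,dx\to 0$, which with weak convergence gives $\|u_k-u\|\to 0$.

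I expect the delicate step to be the Choquard estimate in the third paragraph: the exponential growth of $f$ forces one to choose $\alpha$ sufficiently small in $(f_2)$, and that flexibility is only available once the uniform bound $\|u_k\|\le M$ from Step 1 is in hand. The remaining pieces form a standard Palais--Smale argument once the uniform control \eqref{2.6}--\eqref{2.11} of $l_\e$ and $L_\e$ together with Proposition \ref{hls} are invoked.
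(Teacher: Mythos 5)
Your proof is correct, but it follows a genuinely different route from the paper's. For boundedness you use essentially the same combination $J_{\e,\la}(u_k)-\theta^{-1}\langle J'_{\e,\la}(u_k),u_k\rangle$ with the Ambrosetti--Rabinowitz-type inequality $(l+1)F(t)\le tf(t)$ coming from $(f_4)$, and control the $l_\e$-terms by \eqref{2.8} and \eqref{2.10}; this matches the paper up to cosmetic choices of the coefficient. The real divergence is in the compactness step. The paper first passes to the limit in both $J_{\e,\la}(u_k)$ and $\langle J'_{\e,\la}(u_k),u_k\rangle$, introduces $P$ and $R$, invokes Lions' concentration-compactness lemma (Lemma \ref{lL}) to get an $L^q$-bound on $R(x,u_k)$, and closes with a two-case contradiction argument ($u\neq 0$ versus $u=0$). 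You instead test with $\varphi=u_k-u$ directly, dispose of the $l_\e$-term by the boundedness of $l_\e$ for fixed $\e>0$, and handle the Choquard term by Hardy--Littlewood--Sobolev followed by H\"older and Trudinger--Moser, choosing $\alpha$ small relative to $M$ and $p_1$. This is shorter and avoids Lions' lemma entirely. The reason it works here is that $(f_2)$ is a subcritical growth condition, valid for \emph{every} $\alpha>0$, so you retain the freedom to shrink $\alpha$ to stay below the Trudinger--Moser threshold $4\pi$ for any given $p_1$ and $M$; the paper's Lions-type argument is the standard more robust machinery for this class of problems and would still be needed if the growth were critical (i.e.\ $(f_2)$ replaced by growth exactly at rate $\exp(4\pi t^2)$), but for the subcritical hypothesis actually assumed in $(f_2)$ your more elementary route is a valid alternative, and in fact it also yields the Palais--Smale condition at $c=0$, which the paper's lemma does not claim.
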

		\begin{proof}
			Let $\{u_k^{\e}\}_{k\in\mathbb{N}}$ be a Palais-Smale sequence for $J_{\e,\la}$ in $H_0^{1}(\Om)$ at level $c$. Throughout this proof we shall denote $u_k^\e$ by $u_k$. Then $\{u_k\}_{k\in\mathbb{N}}$ satisfies 
			\begin{equation}\label{eq2.14}
				\frac{\|u_k\|^2}{2}+\int\limits_{\Om}L_\e(u_k)dx-\frac{\la}{2} \I{\Om}\left( \I{\Om}\frac{F(u_k(y))}{|x-y|^\mu}dy\right)F(u_k(x))dx \ra c~\text{as}~k\ra \infty,
			\end{equation}	
			and there is a sequence $\rho_k \ra 0$ such that
			\begin{equation}\label{eq2.15}
				\left| \int\limits_{\Om}\na u_k \na w +\int\limits_{\Om}l_\e(u_k)w-\la \I{\Om}\left( \I{\Om}\frac{F(u_k(y))}{|x-y|^\mu}dy\right)f(u_k(x))w(x)dx\right|\leq \rho_k \|w\|,
			\end{equation}	
			for all $w \in H_0^{1}(\Om)$.	
			Taking $w=u_k$ in \eqref{eq2.15}, we have
			\begin{equation}\label{e2.19}
				\left| \|u_k\|^2 +\int\limits_{\Om}l_\e(u_k)u_k-\lambda\I{\Om}\left( \I{\Om}\frac{F(u_k(y))}{|x-y|^\mu}dy\right)f(u_k(x))u_k(x)dx\right| \leq \rho_k \|u_k\|.
			\end{equation}	
			From the assumption $(f_4)$, there exists $\zeta >1$ such that $\zeta F(t) \leq tf(t)$ for any $t>0$, which yields
			\begin{equation*}
				\zeta \I{\Om}\left( \I{\Om}\frac{F(u(y))}{|x-y|^\mu}dy\right)F(u(x))dx \leq \I{\Om}\left( \I{\Om}\frac{F(u(y))}{|x-y|^\mu}dy\right)f(u(x))u(x)dx.
			\end{equation*}
			Then 
			\begin{align}\nonumber \label{e2.20}
				J_{\e,\la}(u_k)-&\frac{1}{2\zeta}\ld J_{\e,\la}^{\prime}(u_k),u_k\rd\\\nonumber
				&=\left(\frac{1}{2}-\frac{1}{2\zeta}\right)\|u_k\|^2 + \I{\Om}L_\e(u_k)-\frac{1}{2\zeta}\I{\Om}l_\e(u_k)u_k\\ \nonumber
				-&\frac{\la}{2}\left(\I{\Om}\left( \I{\Om}\frac{F(u_k(y))}{|x-y|^\mu}dy\right)F(u_k(x))dx-\frac{1}{\zeta}\I{\Om}\left( \I{\Om}\frac{F(u_k(y))}{|x-y|^\mu}dy\right)f(u_k(x))u_k(x)dx\right)\\
				&\geq \left(\frac{1}{2}-\frac{1}{2\zeta}\right)\|u_k\|^2 + \I{\Om}L_\e(u_k)-\frac{1}{2\zeta}\I{\Om}l_\e(u_k)u_k.
			\end{align}
			Now we estimate the second and the third terms in \eqref{e2.20} independent of $\e$. Using Sobolev embedding theorem, we have for sufficiently large $K$ and $\beta \in (0,1)$
			\begin{align}\nonumber\label{e2.21}
				\I{\Om}L_\e(u_k) =& \I{\Om}\I{0}^{u_k} -\frac{t^q}{(t+\e)^{q+\ba}}\log\left(t+\frac{\e}{t+\e}\right)dtdx
				\geq \I{\Om}\I{1-\e}^{u_k} -t^{-\ba}\log\left(t+\frac{\e}{t+\e}\right)dtdx\\
				\geq & -\I{\Om}\I{0}^{u_k} t^{-\ba}\log\left(t+K\right)dtdx
				\geq -\I{\Om}u_kdx-C \geq -C\|u_k\|-C.
			\end{align}
			Again using Sobolev embedding and H\"{o}lder's inequality, we have for $\gamma \in (0,1-\ba)$
			\begin{align}\nonumber\label{e2.22}
				-\I{\Om}l_\e(u_k)u_k=&\I{\Om}\frac{u_k^q}{(u_k+\e)^{q+\ba}}\log\left(u_k+\frac{u_k}{u_k+\e}\right)u_k
				\geq\I{\{0<u_k<1-\e\}}u_k^{1-\ba}\log\left(u_k+\frac{u_k}{u_k+\e}\right)\\
				\geq& \I{\{0<u_k<1-\e\}}\log(u_k)u_k^{1-\ba}
				\geq  -\I{\Om}u_k^{1-\beta-\gamma}dx\geq -C\|u_k\|-C.
			\end{align}
			Plugging \eqref{e2.21} and \eqref{e2.22} in \eqref{e2.20}, we have
			
			\begin{equation}\label{e2.23}
				J_{\e,\la}(u_k)-\frac{1}{2\zeta}\ld J_{\e,\la}^{\prime}(u_k),u_k\rd\geq \left(\frac{1}{2}-\frac{1}{2\zeta}\right)\|u_k\|^2-C\|u_k\|-C.
			\end{equation}
			Also from \eqref{eq2.14} and \eqref{e2.19}, we have
			\begin{equation}\label{e2.24}
				J_{\e,\la}(u_k)-\frac{1}{2\zeta}\ld J_{\e,\la}^{\prime}(u_k),u_k\rd \leq C\left(1 +\rho_k \frac{\|u_k\|}{2 \zeta}\right),
			\end{equation}
			for some constant $C>0$. Therefore from \eqref{e2.23} and \eqref{e2.24}
			\begin{equation*}
				\left(\frac{1}{2}-\frac{1}{2\zeta}\right)\|u_k\|^2-C\|u_k\|-C \leq  C\left(1 +\rho_k \frac{\|u_k\|}{2 \zeta}\right).
			\end{equation*}
			This implies that $\{u_k\}$ must be bounded in $H_0^{1}(\Om)$. Also from the calculations it is clear that we can find $M>0$ independent of $\e$ and $\lambda$ such that 
			\begin{equation}\label{eM}
				\|u_k\| \leq M.
			\end{equation} Consequently, there exists $u \in H_0^{1}(\Om)$ such that up to subsequences, we have
			\begin{align}\label{ewc}
				\begin{cases}
					u_k \rightharpoonup u~\text{weakly in}~H_0^{1}(\Om),\\
					u_k \ra u~\text{in}~L^p(\Om)~\text{for every}~p\geq 1,\\
					u_k(x)\ra u(x)~\text{a.e. in }~\Om,\\
					|u_k|\leq z_p~\text{a.e. in }~\Om~\text{for some}~z_p \in L^p(\Om)\,\mbox{with any } p\in [1,\infty).
				\end{cases}	
			\end{align}
			Now using Proposition \ref{p2.6}, we have $l_\e(u_k) \ra l_\e(u)$ and $L_\e(u_k) \ra L_\e(u)$ in $L^1(\Om)$. 
			Next we show that 
			\begin{equation}\label{e2.27}
				\left(\I{\Om}\frac{F(u_k(y))}{|x-y|^\mu}dy\right)f(u_k(x))dx \ra \left(\I{\Om}\frac{F(u(y))}{|x-y|^\mu}dy\right)f(u(x)) ~\text{in } L^1(\Om).
			\end{equation}
			We have
			\begin{align*}
				&\I{\Om}\left|\left(\I{\Om}\frac{F(u_k(y))}{|x-y|^\mu}dy\right)f(u_k(x))-\left(\I{\Om}\frac{F(u(y))}{|x-y|^\mu}dy\right)f(u(x))\right|\\
				\leq&\I{\Om}\left(\I{\Om}\frac{F(u(y))}{|x-y|^\mu}dy\right)|f(u_k(x))-f(u(x))|dx\\
				&+\I{\Om}\left(\I{\Om}\frac{|F(u_k(y))-F(u(y))|dy}{|x-y|^\mu}\right)f(u_k(x))dx\\
				 :=& I_1 +I_2 ~\text{(say)}.
			\end{align*}
			From \eqref{e2.4}, we know that $F(u) \in L^r(\Om)$ for any $r \in [1,\infty)$. Since $\mu \in (0,1)$, $y \mapsto |x-y|^{-\mu} \in L^{s}(\Om)$ for all $s \in \left(1,\frac{1}{\mu}\right)$ uniformly in $x \in \Om$ (since $\Om$ is bounded). So using H\"{o}lder's inequality we get that 
			\begin{equation}\label{e2.26}
				\I{\Om}\frac{F(u(y))}{|x-y|^\mu}dy \in L^{\infty}(\Om).
			\end{equation}
			Now by  using \eqref{e2.26} and Proposition \ref{p2.6}, we get that $I_1 \ra 0$ as $k \ra \infty$. Now using the semigroup property of Riesz potential, we have 
			\begin{align}\nonumber
			&	\I{\Om}\left(\I{\Om}\frac{|F(u_k(y)-F(u(y)))|}{|x-y|^\mu}dy\right)f(u_k(x))dx\\ \nonumber
				\leq& \left( \I{\Om}\left(\I{\Om}\frac{|F(u_k(y))-F(u(y))|}{|x-y|^\mu}dy\right)|F(u_k(x))-F(u(x))|dx\right)^\frac{1}{2}\\
				&  \left( \I{\Om}\left(\I{\Om}\frac{f(u_k(y))}{|x-y|^\mu}dy\right)f(u_k(x))dx\right)^\frac{1}{2}.
			\end{align}
			Again by Hardy-Littlewood-Sobolev inequality and \eqref{e2.3}, we have
			\begin{align*}
				\left( \I{\Om}\left(\I{\Om}\frac{f(u_k(y))}{|x-y|^\mu}dy\right)(f(u_k(x)))dx\right)^\frac{1}{2} \leq& \left( \I{\Om}|f(u_k)|^{\frac{4}{4-\mu}}\right)^\frac{4-\mu}{4}\\
				\leq & C\left( \I{\Om}\exp\left(\frac{4}{4-\mu}\al u_k^{2}\right)\right)^\frac{4-\mu}{4}.
			\end{align*}
			Choosing $\al >0$ such that $\frac{4}{4-\mu}\al M^{2} \leq 4\pi$, where $M$ is given by \eqref{eM}, we obtain using \eqref{eq2.3} that
			\begin{equation*}
				\left( \I{\Om}\left(\I{\Om}\frac{f(u_k(y))}{|x-y|^\mu}dy\right)(f(u_k(x)))dx\right)^\frac{1}{2} \leq \widetilde{C},
			\end{equation*}
			where $\widetilde{C}$ is a positive constant. Next 
			\begin{equation*}
				\left( \I{\Om}\left(\I{\Om}\frac{F(u_k(y))-F(u(y))}{|x-y|^\mu}dy\right)(F(u_k(x)))-F(u(x))dx\right) \ra 0
			\end{equation*}
			follows similarly as in the proof of Lemma $3.4$ of \cite{AGMS}. Finally, the convergence
			\begin{equation}\label{e2.30}
				\left(\I{\Om}\frac{F(u_k(u))}{|x-y|^\mu}dy\right) F(u_k(x))\ra 	\left(\I{\Om}\frac{F(u(u))}{|x-y|^\mu}dy\right) F(u(x))~\text{in}~L^1(\Om)
			\end{equation}
			follows from $(f_5)$ and Vitali's convergence Theorem  (see for instance the proof of Lemma $2.4$ in \cite{ACT}). From now onwards in this proof we use the following notations
			\begin{equation*}
				P(x,u)=\frac{\la}{2}\left(\I{\Om}\frac{F(u(y))}{|x-y|^\mu}dy\right) F(u(x)) -L_{\e}(u)~\text{and}~ R(x,u)=\la \left(\I{\Om}\frac{F(u(y))}{|x-y|^\mu}dy\right)f(u)-l_\e(u).
			\end{equation*}
			From \eqref{eq2.14}, \eqref{e2.19}, \eqref{e2.27} and \eqref{e2.30}, we have
			\begin{equation}\label{e2.31}
				\lim\limits_{k \ra \infty} \|u_k\|^2 = 2\left(c + \I{\Om}P(x,u)dx \right)
			\end{equation}
			and 
			\begin{equation}\label{e2.32}
				\lim\limits_{k \ra \infty} \I{\Om}R(x,u_k)u_k =2\left(c + \I{\Om}P(x,u)dx \right).
			\end{equation}
			Now, we consider two cases:\\
			\textbf{Case 1:} Suppose that $c \ne 0$ and $u \ne 0$. One has $J_{\e,\la}(u)\leq c$. We claim that $J_{\e,\la}(u)=c$. Assume by contradiction that $J_{\e,\la}(u) <c.$
			Then
			\begin{equation}\label{e2.33}
				\|u\|^2 < 2\left(c +\I{\Om}P(x,u)dx\right).
			\end{equation}
			Let $v_k =\ds \frac{u_k}{\|u_k\|}$ and $\ds v = \frac{u}{\left( 2\left(c + \I{\Om}P(x,u)dx \right)\right)^{1/2}}$. It follows that $v_k \rightharpoonup v$ weakly in $H_0^{1}(\Om)$, $\|v_k\|=1$ and $\|v\|<1$. Hence by Lemma \ref{lL}, we have
			\begin{equation}\label{e2.34}
				\sup\limits_{k} \I{\Om}\exp\left(4\pi pv_k^{2}\right)\leq k_1=k_1(p)<\infty\text{ for any } 1<p<(1-\|v\|^2)^{-1}.
			\end{equation}
			Now for $q>1$ small enough, we have
		\begin{align}\nonumber\label{eC}
				\I{\Om}|R(x,u_k)|^q \leq& C \I{\Om}\left[\left(\I{\Om}\frac{|F(u_k(y))f(u_k(x))dy}{|x-y|^\mu}\right)^q+|l_\e(u_k(x))|^q\right]dx\\ \nonumber
				\leq& C\left(\I{\Om}\I{\Om}\frac{\exp\left(q\al u_k^{2}(y)\right)\exp \left(q\al u_k^{2}(x)\right)}{|x-y|^{q\mu}}dxdy +1\right)\\
				\leq& C \left(\I{\Om}\exp\left(\frac{4}{4-q\mu}q\al u_k^{2}\right)\right)^\frac{4-q\mu}{2}+C.
			\end{align}
			Now we choose $\al$ and $p$ such that 
			\begin{equation*}
				\frac{4}{4-q\mu}q\al\|u_k\|^{2} <p <\frac{4\pi}{(1-\|v\|^2)}.
			\end{equation*}
			Hence fix $p>0$ such that $\ds p <\frac{4\pi}{(1-\|v\|^2)}$ and take $\al >0$ small such that $$\ds \al <\frac{4-q\mu}{4}\frac{p}{qM^{2}},$$ where $M$ is given by \eqref{eM}. Then by \eqref{e2.34}, we have 
			\begin{equation}\label{e2.35}
				\I{\Om}|R(x,u_k)|^q \leq C, ~\text{for some}~C>0.
			\end{equation}
	Using \eqref{ewc}, \eqref{e2.32}, \eqref{e2.35}, and H\"{o}lder's inequality we have
			\begin{align}\label{e2.36}
				2\left(c+\I{\Om}P(x,u)dx\right)=\lim\limits_{k \ra \infty}\I{\Om}R(x,u_k)u_kdx =\lim\limits_{k \ra \infty} \I{\Om}R(x,u_k)udx.
			\end{align}
			On the other hand from \eqref{e2.15} 
			\begin{align*}
				\left| \I{\Om}\na u_k\na u -\I{\Om}R(u_k)u\right|\leq \rho_k\|u\| ~\text{for each}~k \in \mb N.
			\end{align*}
			Hence,
			\begin{align}\label{e2.38}
				-\rho_k\|u\| + \I{\Om}R(x,u_k)u\leq \I{\Om}\na u_k \na u\leq \rho_k\|u\|  + \I{\Om}R(x,u_k)u.
			\end{align}
			Taking $k \ra \infty$ in \eqref{e2.38} and using \eqref{e2.36}, we get 
			\begin{equation*}
				\|u\|^2= 2 \left(c+\I{\Om}P(x,u)dx\right),
			\end{equation*}
			which is a contradiction to \eqref{e2.33}. 
			Therefore, we must have $J_{\e,\la}(u)=c$. As a consequence, using \eqref{e2.31}, we obtain
			\begin{equation*}
				\lim\limits_{k \ra \infty} \|u_k\|^2 = 2\left(c +\I{\Om}P(x,u)dx \right) =\|u\|^2.
			\end{equation*}
			Thus, it follows that $u_k \ra u$ strongly in $H_0^{1}(\Om)$.\\
			\textbf{Case 2:} Assume that $c \ne 0$ and $u =0$. We will prove that this cannot happen. Fix a constant $0<a<1$. From $ u =0$, we conclude from \eqref{e2.31} that  for large $k$
			\begin{equation}
				\|u_k\|^2 \leq 2c+a.
			\end{equation}
			According to \eqref{eC}, take $q>1$ small enough such that $ \ds\al < \frac{4-\mu}{4}\frac{4\pi}{q(2c+a)} $. Then,
			\begin{equation*}
				\I{\Om}|R(x,u_k)|^q <C,
			\end{equation*}
			where $C$ is some positive constant. Now using H\"{o}lder's inequality and the fact that $u_k \ra 0$ strongly in $L^{\frac{q}{q-1}}(\Om)$, we get
			\begin{align}
				\left|	\I{\Om}R(x,u_k)u_k\right| \leq \left( \I{\Om}|R(x,u_k)|^q\right)^\frac{1}{q}\left( \I{\Om}|u_k|^{\frac{q}{q-1}}\right)^\frac{q-1}{q}\ra 0 ~\text{as}~k \ra \infty.
			\end{align}
			On the other hand from \eqref{e2.19}, we have
			\begin{equation*}
				\left| \I{\Om}|\na u_k|^2 - \I{\Om}R(x,u_k)u_k\right| \leq \rho_k\|u_k\|.
			\end{equation*}
			This means that 
			\begin{align*}
				-\rho_k M +\I{\Om}R(x,u_k)u_k \leq \I{\Om}|\na u_k|^2dx \leq \rho_kM +\I{\Om}R(x,u_k)u_k.
			\end{align*}
			Taking limit as $ k\ra \infty$ above, we get $\|u_k\| \ra 0$. This contradicts the fact that $\|u_k\| \ra 2c \ne 0$. The proof is complete. \QED
		\end{proof}
		Let $ \varphi \in H_0^{1}(\Om)\cap L^{\infty}(\Om)$ be such that $\varphi >0$ in $\Om$ and $\|\varphi\| =1$.

		\begin{Lemma}\label{lmp}
			Suppose $f$ satisfies \eqref{1.3} and take $\la >0$. Then there exists a constant $K =K(\la)$ independent of $\epsilon$ such that 
			\begin{equation*}
				J_{\e,\la}(K \varphi) <0.
			\end{equation*}
			Also there exists a constant $m_2 >0$, which is independent of $\e$, such that
			\begin{equation*}
				\sup\limits_{t \in [0,1]} J_{\e,\la}(tK \varphi)<m_2.
			\end{equation*}
		\end{Lemma}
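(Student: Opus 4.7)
Since $\|\varphi\|=1$ and $F\ge 0$ on $[0,\infty)$ (the natural case for Choquard problems, consistent with the hypotheses on $f$), we have
\[
J_{\e,\la}(K\varphi)=\frac{K^{2}}{2}+\int_{\Om}L_\e(K\varphi)\,dx-\frac{\la}{2}\int_{\Om}\int_{\Om}\frac{F(K\varphi(y))F(K\varphi(x))}{|x-y|^{\mu}}\,dy\,dx,
\]
with the Choquard integral nonnegative. The strategy is to show that as $K\to\infty$ this Choquard integral, bounded below by a positive multiple of $K^{2\gamma}$, dominates both the Dirichlet piece ($\sim K^{2}$) and the absorption piece (at most $\sim K^{2-\beta}$), with all constants free of $\e$.

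For the absorption piece, estimate \eqref{2.8} gives, uniformly in $\e$,
\[
\int_{\Om}L_\e(K\varphi)\,dx\le\int_{\Om}|L_\e(K\varphi)|\,dx\le C_{1}K^{2-\beta}+C_{2}K^{1-\beta}+C_{3},
\]
where the $C_{i}>0$ depend only on $\varphi$ and $|\Om|$. For the nonlocal piece I would fix $\de>0$ so that $\Om_{\de}:=\{x\in\Om:\varphi(x)\geq\de\}$ has positive measure (possible since $\varphi>0$ in $\Om$); for $K\ge t_{0}/\de$ one has $K\varphi\ge t_{0}$ on $\Om_{\de}$, and \eqref{1.3} yields $F(K\varphi)\ge A\de^{\gamma}K^{\gamma}$ there. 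Since $\mu<2$ and $\Om_\de$ is bounded in $\R^{2}$, $|x-y|^{-\mu}$ is integrable on $\Om_{\de}\times\Om_{\de}$, so restricting the nonnegative double integral to this subset yields
\[
\int_{\Om}\int_{\Om}\frac{F(K\varphi(y))F(K\varphi(x))}{|x-y|^{\mu}}\,dy\,dx\ge C_{4}K^{2\gamma},
\]
with $C_{4}>0$ independent of $\e$. Combining these estimates and using $2\gamma>2>2-\beta$ allows me to fix $K=K(\la)$ large enough that $J_{\e,\la}(K\varphi)<0$ for every $\e\in(0,\e^{\ast})$.

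For the second assertion, with $K=K(\la)$ as above and $t\in[0,1]$, the quadratic term is bounded by $K^{2}/2$, and since $tK\varphi\le K\varphi$ pointwise and the majorant in \eqref{2.8} is nondecreasing, $\int_{\Om}L_\e(tK\varphi)\,dx$ admits the same $\e$-free bound as before. Since $F\ge 0$, the Choquard term enters with a minus sign and is nonpositive, so discarding it gives
\[
J_{\e,\la}(tK\varphi)\le\frac{K^{2}}{2}+C_{1}K^{2-\beta}+C_{2}K^{1-\beta}+C_{3}=:m_{2},
\]
uniformly in $\e\in(0,\e^{\ast})$ and $t\in[0,1]$. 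The only delicate point throughout is preserving $\e$-independence; this is secured by the $\e$-free majorant in \eqref{2.8}. Beyond that, the argument reduces to an elementary comparison of the powers $K^{2-\beta}$, $K^{2}$, and $K^{2\gamma}$ after restricting to a set where $\varphi$ is bounded away from $0$.
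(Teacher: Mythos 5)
Your argument is correct and follows essentially the same approach as the paper: the Dirichlet term is $\tfrac{K^2}{2}$, the absorption term is sub-quadratic in $K$ and uniform in $\e$, the Choquard term is bounded below by $cK^{2\gamma}$ via \eqref{1.3} on a set where $\varphi$ is bounded away from zero, and $2\gamma>2$ drives $J_{\e,\la}(K\varphi)$ negative; for the supremum over $t\in[0,1]$ the relevant terms are bounded by an $\e$-free constant once $K$ is fixed. The one cosmetic difference is the estimate for $\int_\Om L_\e$: you invoke the polynomial majorant \eqref{2.8} (yielding $\lesssim K^{2-\beta}$), whereas the paper uses the sharper uniform bound $L_\e\le\widetilde m$ (a consequence of $l_\e\le0$ on $[1-\e,\infty)$ together with \eqref{2.7}), but either is adequate since $2-\beta<2<2\gamma$. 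Both proofs tacitly rely on $F\ge 0$ to justify restricting the Choquard double integral to a subdomain where $\varphi$ is bounded below; you made this assumption explicit, which is a fair reading of the hypotheses.
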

		\begin{proof} Since $l_{\e}(t)\leq 0$ for all $t\geq 1-\e$, we conclude using \eqref{2.7} that 
			\begin{equation}\label{eqf}
				L_{\e}(t)\leq \widetilde{m}~\text{ for all} ~t \geq 0~\text{and}~\e>0.
			\end{equation}
			Choose a compact set $\widetilde{\Om} \subset \Om$ such that $\varphi > a_0 >0$ on $\widetilde{\Om}$ for some $a_0>0$. Then using \eqref{1.3} and \eqref{eqf}, for $t$ sufficiently large, we have
			\begin{align*}
				J_{\e,\la}(t \varphi) \leq& \frac{t^2}{2}+\widetilde{m}|\Om|- \la \I{\widetilde{\Om}}\I{\widetilde{\Om}}\frac{F(t\varphi(y))F(t\varphi(x))}{|x-y|^\mu}dxdy\\
				\leq & \frac{t^2}{2} +\widetilde{m}|\Om| -\la \I{\widetilde{\Om}}\I{\widetilde{\Om}}\frac{A(t \varphi)^\gamma(y)A(t \varphi)^\gamma(x)}{|x-y|^\mu}dxdy\\
				=& \frac{t^2}{2} +\widetilde{m}|\Om| -A^2\la t^{2\gamma} \I{\widetilde{\Om}}\I{\widetilde{\Om}}\frac{ \varphi^\gamma(y) \varphi^\gamma(x)}{|x-y|^\mu}dxdy\\
				=& \frac{t^2}{2} +\widetilde{m}|\Om| -C(\la) t^{2\gamma} \ra -\infty ~\text{as}~ t \ra \infty.
			\end{align*}			
			Hence, there exists $K >0$ such that $J_{\e,\la}(K\varphi)<0$.\\
			Now we see that 
			\begin{equation*}
				J_{\e,\la}(tK \varphi)=\frac{t^2K^2}{2}+\I{\Om}L_{\e}(tK\varphi)-\la \I{\Om}\I{\Om}\frac{F(tK\varphi(y))F(tK\varphi(y))}{|x-y|^\mu}dxdy.
			\end{equation*}
			Since $0 \leq tK\varphi \leq K \sup\limits_{\Om} \varphi$ for all $0 \leq t \leq 1$, there exists $C_1>0$ depending on $\la$ such that 
			\begin{equation*}
				J_{\e,\la}(tK\varphi)\leq \frac{t^2K^2}{2}+C_1~\text{for all}~0\leq t\leq1.
			\end{equation*}
			Hence,
			\begin{equation*}
				\sup\limits_{t \in [0,1]} J_{\e,\la}(tK\varphi)<m_2,
			\end{equation*}
			where
			\begin{equation*}
				m_2=\frac{K^2}{2}+C_1.
			\end{equation*} 
			This proves Lemma \ref{lmp}.\QED 
		\end{proof}
		Next proposition provides the existence of a critical point $u_\e$ for the functional
		$J_{\e,\la}$ and a priori bounds that hold for these solutions.
		\begin{Proposition}\label{p2.9}
			Suppose $f$ satisfies $(f_2)$, $(f_4)$ and $(f_5)$ and let $m_2$ given by Lemma \ref{lmp}. Then there exist constants $\bar{\la}$, $m_1 >0$ such that for each $0<\la <\bar{\la}$ fixed, problem $ (\mc P_{\e,\la})$ has a weak solution $u_\e$ with $0<m_1<J_{\e,\la}(u_\e)<m_2$. Also there is a constant $M>0$ independent of $\e$ such that 
			\begin{equation*}
				\|u_\e\|\leq M.
			\end{equation*}
		\end{Proposition}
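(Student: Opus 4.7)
The plan is to apply the mountain pass theorem to $J_{\e,\la}$ using the path $t \mapsto tK\varphi$ provided by Lemma \ref{lmp}, keeping every constant independent of $\e\in(0,\e^*)$ so that the resulting critical point inherits a uniform $H_0^1$ bound.

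The key step is to establish uniform mountain pass geometry: to exhibit $\rho\in(0,K)$, $\bar\la>0$ and $m_1>0$, all independent of $\e\in(0,\e^*)$ and $\la\in(0,\bar\la)$, such that $J_{\e,\la}(u)\geq m_1$ whenever $\|u\|=\rho$. For the potential term I would combine \eqref{2.7}, which yields $L_\e\geq 0$ on $\{u\leq 1-\e\}$, with \eqref{2.9} applied to a fixed $p_0>2$, which yields $L_\e(u)\geq -k_0 u^{p_0}$ on $\{u\geq 1-\e\}$; Sobolev embedding then produces $\int_\Om L_\e(u)\geq -C\|u\|^{p_0}$. For the Choquard term I would use $(f_3)$ near zero, so that $F(t)\leq Ct^{r_0+1}$ for small $t$, and the exponential control \eqref{e2.3} with $\al$ chosen small, combined with Hardy--Littlewood--Sobolev (Proposition \ref{hls}) and the Trudinger--Moser inequality (Theorem \ref{t2.1}), to dominate the Choquard term by $C\la\|u\|^{2(r_0+1)}$ plus higher-order contributions. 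Since $p_0>2$ and $2(r_0+1)>2$, the kinetic term $\rho^2/2$ dominates for $\rho$ small, and decreasing $\bar\la$ if necessary absorbs the $\la$-dependent piece.

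With this geometry in hand, set
$$c_\e:=\inf_{\gamma\in\Gamma}\max_{t\in[0,1]}J_{\e,\la}(\gamma(t)),\qquad \Gamma:=\{\gamma\in C([0,1],H_0^1(\Om)):\gamma(0)=0,\ \gamma(1)=K\varphi\}.$$
By the previous step $c_\e\geq m_1$, while the explicit path $t\mapsto tK\varphi$ together with Lemma \ref{lmp} gives $c_\e\leq m_2$. Since $c_\e\neq 0$, Lemma \ref{lb} furnishes the Palais--Smale condition at level $c_\e$; the standard mountain pass theorem then produces a Palais--Smale sequence that converges, up to a subsequence, strongly to a critical point $u_\e$ of $J_{\e,\la}$ with $J_{\e,\la}(u_\e)=c_\e\in[m_1,m_2]$, which is the required weak solution of $(\mc P_{\e,\la})$. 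The uniform bound $\|u_\e\|\leq M$ comes for free by inspecting the proof of Lemma \ref{lb}: the estimate \eqref{eM} was derived from the PS inequalities alone and its constant $M$ does not depend on $\e$ or $\la$, so it passes to the strong limit.

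\textbf{Main obstacle.} The genuine difficulty is the uniform-in-$\e$ mountain pass geometry. The positivity of $L_\e$ on $\{u\leq 1-\e\}$ paired with the superquadratic control \eqref{2.9} on its complement is exactly what is needed, but simultaneously weaving these estimates with the HLS/Trudinger--Moser bound on the Choquard nonlinearity---and verifying that the choice of $\bar\la$, $\rho$, $\al$, and $p_0$ can be made jointly and without reference to $\e$---requires careful bookkeeping, in particular because the splitting set $\{u\gtrless 1-\e\}$ itself depends on $\e$.
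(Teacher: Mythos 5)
Your argument is correct and stays within the paper's mountain pass framework (uniform geometry $\Rightarrow$ Palais--Smale level in $(m_1,m_2)$ $\Rightarrow$ Lemma~\ref{lb} $\Rightarrow$ critical point, with the uniform $H_0^1$ bound inherited from \eqref{eM}), but the mountain pass geometry is established by a genuinely different mechanism. The paper bounds the entire Choquard double integral on the ball $\|u\|\leq\theta$ by an absolute constant $\widetilde C=\widetilde C(\theta)$, using only HLS, the exponential bound \eqref{e2.3}, and Trudinger--Moser; this gives $J_{\e,\la}(u)\geq \tfrac12\|u\|^2 - k_0 C_{p_0}^{p_0}\|u\|^{p_0}-\la\widetilde C$, after which $\theta$ is fixed so that the $\|u\|^{p_0}$ term eats at most half the kinetic term, and $\bar\la=\theta^2/(8\widetilde C)$ is chosen to make the leftover constant $\la\widetilde C$ negligible. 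You instead use $(f_3)$ (together with \eqref{e2.3}, HLS, and Trudinger--Moser) to extract the superquadratic power $C\la\|u\|^{2(r_0+1)}$ from the Choquard term, so the geometry follows by power-counting alone, without invoking smallness of $\la$. Both routes yield an $\e$-independent $m_1$, and both deliver the $\e$-independent $\|u_\e\|\leq M$ via \eqref{eM}, as you note. Two small remarks: first, $(f_3)$ is not among the hypotheses of Proposition~\ref{p2.9} (only $(f_2)$, $(f_4)$, $(f_5)$ are assumed), so your argument proves the proposition under slightly stronger assumptions than the paper's proof does --- harmless here since $(f_3)$ is a standing assumption of Theorem~\ref{tmr}, but worth being aware of; second, once you have the superquadratic Choquard bound, the remark about ``decreasing $\bar\la$ if necessary'' is redundant for the geometry itself, though a small $\bar\la$ is still required downstream (Lemma~\ref{l2.10} and Lemma~\ref{l3.1}) for the uniform $L^\infty$ and gradient estimates.
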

		\begin{proof}
			Let $M>0$ and $u\in H^{1}_0(\Omega)$ such that $||u||\leq M$. Using Hardy-Littlewood-Sobolev inequality and \eqref{e2.3}, we have 	
			\begin{align}\nonumber\label {e2.42}
				\I{\Om}\I{\Om}\frac{F(u(y))F(u(x))}{|x-y|^\mu}dydx \leq& C\left(\I{\Om}|F(u)|^{\frac{4}{4-\mu}}dx\right)^\frac{4-\mu}{2}\\
				\leq & C\left(\I{\Om}\exp\left(\frac{4}{4-\mu}\al u^{2}\right)dx\right)^\frac{4-\mu}{2}.
			\end{align}
			Taking $\al \leq \displaystyle\frac{4-\mu}{ M^{2}}\pi$, we get using \eqref{eq2.3} and \eqref{e2.42}, a positive $\widetilde{C}=\widetilde{C}(M)$ such that
			\begin{equation}\label{e2.43}
				\I{\Om}\I{\Om}\frac{F(u(y))F(u(x))}{|x-y|^\mu}dydx \leq \widetilde{C}.
			\end{equation}
			Now by using \eqref{2.9}, \eqref{e2.43} and Sobolev embedding theorem, we have for $p_0 >2$
			\begin{align} \label{e2.44}
				J_{\e,\la}(u)\geq& \frac{\|u\|^2}{2}-k_0\I{\Om}|u|^{p_0}-\la\widetilde{C}
				\geq \frac{\|u\|^2}{2}-k_0C_{p_0}^{p_0}\|u\|^{p_0}-\la\widetilde{C}.
			\end{align}	
			If we assume that $M \leq \theta$, where 
			$\ds	\theta =\left( \frac{1}{4k_0 C_{p_0}^{p_0}}\right)^\frac{1}{p_0-2},$	
			then from \eqref{e2.44}, we have
			\begin{equation*}
				J_{\e,\la}(u)\geq \frac{\|u\|^2}{4}-\la\widetilde{C}.
			\end{equation*}
			Finally, by taking $\ds \bar{\la} =	\frac{\theta^2}{8\widetilde{C}}$, we obtain
			\begin{equation*}
				J_{\e,\la}(u)\geq \frac{1}{4}\left(\|u\|^2 -\frac{\theta^2}{2} \right)~\text{for}~0<\la<\bar{\la}, ~\|u\|<\theta.
			\end{equation*}
			Thus, we obtain for $0<\lambda<\bar{\la}$
			\begin{equation*}
				J_{\e,\la}(0)=0,~J_{\e,\la}(K\varphi)<0~
				\text{and}~
				J_{\e,\la}(u)\geq m_1~\text{for}~\|u\| =\theta,
			\end{equation*}		
			where \begin{equation*}
				m_1 =\frac{\theta^2}{8}.
			\end{equation*}	
			Let \begin{equation*}
				\Sigma =\{\sigma \in C([0,1],H_0^{1}(\Om)): \sigma(0)=0,~ \sigma(1)=K\varphi\}.
			\end{equation*}		
			By the Mountain Pass theorem \cite[page 12]{W}, we conclude that there is a sequence $\{u_k^\e\}$ in $H_0^{1}(\Om)$ and a number
			\begin{equation*}
				c_\e:= \inf\limits_{\sigma \in \Sigma}\sup\limits_{t \in [0,1]} J_{\e,\la}(\sigma(t))
			\end{equation*} 		
			such that
			\begin{equation*}
				\lim\limits_{k \ra \infty} J_{\e,\la}(u_k^\e)=c_\e~\text{and}~\lim\limits_{k \ra \infty} J_{\e,\la}^{\prime}(u_k^\e)=0,
			\end{equation*}	
			i.e.,
			\begin{equation*}
				\frac{\|u_k^\e\|^2}{2}+\I{\Om}L_\e(u_k^\e)-\I{\Om}\I{\Om}\frac{F(u_k^\e)(y)F(u_k^\e)(x)}{|x-y|^\mu}dxdy \ra c_\e ~\text{as}~k \ra \infty
			\end{equation*}
			and there is a sequence $\rho_k$ such that 
			\begin{equation*}
				\left| \int\limits_{\Om}\na u_k^\e\na w +\int\limits_{\Om}l_\e(u_k^\e)w-\la \I{\Om}\left( \I{\Om}\frac{F(u_k^\e(y))}{|x-y|^\mu}dy\right)f(u_k^\e(x))w(x)dx\right|\leq \rho_k \|w\|.
			\end{equation*}
			It is clear that $c_\e >m_1 >0$. Using Lemma \ref{lmp}, we also obtain 
			\begin{equation*}
				c_\e \leq \sup\limits_{t \in [0,1]} J_{\e,\la}(tK\varphi)<m_2.
			\end{equation*}
			Hence for sufficiently large $k$,
			\begin{equation}\label{e2.45}
				0<m_1\leq J_{\e,\la}(u_k^\e)<m_2.
			\end{equation}
			Arguing as in the proof of Lemma \ref{lb}, we may use \eqref{e2.45} to obtain a constant $M>0$ that does not depend upon $\e$ such that 
			\begin{equation*}
				\|u_k^\e \| <M.
			\end{equation*}
			We conclude there is $u_\e \in H_0^{1}(\Om)$ with $\|u_\e\| <M$ such that 
			\begin{equation*}
				u_k^\e \rightharpoonup u_\e ~\text{weakly in}~H_0^{1}(\Om).
			\end{equation*}
			We know that $\{u_k^\e\}$ is a Palais-Smale sequence at a positive level. It follows from \eqref{e2.45} and Lemma \ref{lb}, that upto a subsequence $u_k^\e \ra u_\e$ strongly in $H_0^{1}(\Om)$. Hence $u_\e$ is weak solution of $ (\mc P_{\e,\la})$. This proves the proposition. \QED	
		\end{proof}
		Next we aim to obtain the uniform $L^{\infty}$ estimates of the weak solutions $u_\e$ independently of $\e$. Precisely, we have the following result:
		\begin{Lemma}\label{l2.10}
	Suppose $f$ satisfies $(f_2)$-$(f_5)$, $0<\la <\bar{\la}$ be fixed and  $\e_0$ and $\de_0$ be given by \eqref{2.11}. Then there exists a constant $M_1 >0$ that does not depend upon $\e$ such that 
			\begin{equation}\label{e2.46}
				|u_\e|_{\infty}<M_1 ~\text{for}~0<\e<\e_0,
			\end{equation}
			where $u_\e$ is the nonnegative solution of $ (\mc P_{\e,\la})$ as obtained in Proposition \ref{p2.9}.
		\end{Lemma}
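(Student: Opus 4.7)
The plan is to reduce the claim to an $L^p$-elliptic regularity argument by showing that the positive part of the right-hand side of the equation solved by $u_\e$ is bounded in $L^p(\Om)$ for every finite $p$, uniformly in $\e$. In dimension two, Calder\'on--Zygmund estimates together with $W^{2,p}(\Om)\hookrightarrow L^\infty(\Om)$ for any $p>1$ will then produce the desired bound after a simple comparison step.

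Write $-\Delta u_\e = R_\e$ with $R_\e := -l_\e(u_\e) + \la K_\e(\cdot) f(u_\e)$ and $K_\e(x) := \int_\Om F(u_\e(y))|x-y|^{-\mu}\,dy$. The uniform bound $\|u_\e\|\leq M$ from Proposition \ref{p2.9}, combined with the Hardy--Littlewood--Sobolev/H\"older argument that gave \eqref{e2.26}, yields $|K_\e|_\infty \leq C$ independent of $\e$. To control $R_\e^+$, I split $\Om$ according to the sign of $l_\e(u_\e)$: on $\{u_\e < 1-\e\}$ one has $l_\e(u_\e)\geq 0$ by construction and $u_\e<1$, hence $R_\e \leq \la |K_\e|_\infty |f(u_\e)|\leq C$ by continuity of $f$; on $\{u_\e \geq 1-\e\}$ the estimate \eqref{2.6} gives $-l_\e(u_\e)\leq u_\e$, and assumption $(f_2)$ (in the form $|f(u_\e)|\leq C_\al \exp(\al u_\e^2)$) leads to $R_\e^+ \leq C_\al \bigl(1 + \exp(\al u_\e^2)\bigr)$ pointwise in $\Om$. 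For any fixed $p\in[1,\infty)$, I pick $\al>0$ small enough that $p\al M^2 \leq 4\pi$ and apply Theorem \ref{t2.1} to $u_\e/\|u_\e\|$ to get $\int_\Om \exp(p\al u_\e^2)\,dx \leq k_1$, and conclude $|R_\e^+|_p \leq C_p$ uniformly in $\e \in (0,\e^*)$.

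Let $v_\e \in H_0^1(\Om)$ solve $-\Delta v_\e = R_\e^+$ in $\Om$ with $v_\e=0$ on $\pa\Om$. Since $R_\e^+\geq 0$, $v_\e \geq 0$, and $-\Delta(v_\e - u_\e) = R_\e^+ - R_\e \geq 0$ weakly; testing with $(v_\e - u_\e)_- \in H_0^1(\Om)$ delivers $u_\e \leq v_\e$ a.e.\ in $\Om$. Fixing any $p>1$, Calder\'on--Zygmund gives $\|v_\e\|_{W^{2,p}(\Om)} \leq C |R_\e^+|_p$, and $W^{2,p}(\Om) \hookrightarrow L^\infty(\Om)$ yields a constant $M_1$, independent of $\e$, with $|v_\e|_\infty \leq M_1$; combined with $0\leq u_\e \leq v_\e$ this is \eqref{e2.46}. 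The main obstacle is that $l_\e(u_\e)$ can be large and strongly $\e$-dependent where $u_\e$ is small, while $f(u_\e)$ has exponential growth where $u_\e$ is large. Both difficulties are resolved in one stroke: the sign condition $l_\e(u_\e)\geq 0$ on $\{u_\e < 1-\e\}$ removes any dangerous contribution of the absorption term to $R_\e^+$, whereas on $\{u_\e \geq 1-\e\}$ the sharp bound \eqref{2.6} keeps $-l_\e(u_\e)$ linear in $u_\e$, and the uniform $H_0^1$-bound together with Trudinger--Moser controls the exponential piece through the freedom to pick $\al$ small.
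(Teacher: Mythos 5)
Your proof is correct, and it takes a genuinely different route from the paper's. The paper works with the weak subsolution inequality: writing the equation tested against nonnegative $\psi\in H_0^1(\Om)$, it uses the sign of $l_\e$ on $\{u_\e<1-\e\}$ together with a ratio estimate $\la\widehat{M}f(t)/l_\e(t)\to 0$ as $t\to 0^+$ (which is where $(f_3)$, \eqref{2.11}, and $q+1<r_0$ enter) to show that the absorption term dominates the Choquard term on $\{u_\e\leq\delta\}$, arriving at $\int_\Om \na u_\e\na\psi \leq C\int_\Om u_\e\exp(\al u_\e^2)\psi$; it then cites Proposition 1 of \cite{FMS}, a Moser-type iteration for such subsolutions, to close the argument. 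You instead discard the absorption term where it has the good sign (exactly the set $\{u_\e<1-\e\}$), use \eqref{2.6} on its complement, and thereby obtain a uniform $L^p$ bound on $R_\e^+$ for every finite $p$ via Trudinger--Moser; the $L^\infty$ bound then follows by comparing $u_\e$ with the $W^{2,p}$-regular solution $v_\e$ of $-\De v_\e=R_\e^+$ and using $W^{2,p}(\Om)\hookrightarrow L^\infty(\Om)$ in dimension two. Both proofs rest on the same structural fact --- the singular absorption term never contributes to the positive part of the right-hand side --- but your version is self-contained (it replaces the appeal to \cite{FMS} with a single Calder\'on--Zygmund estimate and a weak comparison principle) and, notably, does not use the ratio estimate and hence not the quantitative form of $(f_3)$ beyond local boundedness of $f$ near $0$. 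The price is that you invoke $L^p$-elliptic regularity up to the boundary, which the paper's Moser-iteration route avoids; in a smooth bounded domain this is harmless. One small point worth making explicit in a final write-up: the bound $|f(u_\e)|\leq C$ on $\{u_\e<1\}$ uses continuity (or at least local boundedness) of $f$ on $[0,1]$, which in the present setting follows from $(f_1)$ or from the consequence $f(0)=0$ of $(f_3)$.
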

		\begin{proof}
			Choose $r_0 \in \left(1,\frac{1}{\mu}\right)$ and let $s_0$ be the conjugate exponent of $r_0$. From \eqref{e2.4}, we know that 
			\begin{equation*}
				F(u_\e) \in  L^{s_0}(\Om)~\text{for every} ~0<\e<\e_0.
			\end{equation*}	
			We claim that there exists $ M^{\prime}>0$, which is independent of $\e$, such that 
			\begin{equation}\label{e2.47b}
				|F(u_\e)|_{s_0} <M^{\prime}.
			\end{equation} This is a consequence of $(f_2)$ , Theorem \ref{t2.1} and the fact that 
			\begin{equation}\label{e2.47}
				\|u_\e\| <M ~\text{independent of }~\e.
			\end{equation}	
			Indeed, using \eqref{e2.3}, we have	
			\begin{align*}
				\I{\Om}|F(u_\e)|^{s_0} \leq C\I{\Om}\exp(\al s_0 u_\e^2).
			\end{align*}	
			Using \eqref{e2.47}, we can find $\al >0$, independent of $\e$, such that  
			\begin{equation*}
				\al s_0 \|u_\e\|^2<4\pi.
			\end{equation*}	
			Using this fact and \eqref{eq2.3}, we conclude that \eqref{e2.47b} holds. Also since $\Om$ is bounded, $ y \mapsto |x-y|^{-\mu} \in L^{r_0}(\Om)$, uniformly in $x \in \Om$. Hence using H\"{o}lder's inequality, we have
			\begin{equation}\label{2.47}
				\I{\Om}\frac{F(u_\e(y))dy}{|x-y|^\mu} \leq |F(u_\e)|_{s_0}\left|\frac{1}{|x-y|^\mu}\right|_{r_0}\leq \widehat{M},
			\end{equation}	
			where $\widehat{M} >0$ is a constant independent of $\e$.\\
			Now, let $ \psi \in H_0^{1}(\Om)$, $\psi \geq 0$. Then since $u_\e$ is a solution of $(\mc P_{\e,\la})$, using \eqref{2.47}, we have
		\begin{align*}
			\I{\Om}\na u_\e \na \psi +\I{\Om}l_\e(u_\e)\psi\leq \la \widehat{M} \I{\Om}f(u_\e)\psi dx. 
		\end{align*}	
	Using \textcolor{red}{$(f_3)$}, \eqref{2.11} and the fact that $q+1<r_0$, we obtain
	\begin{align*}
		\ds \frac{\la \widehat{M} f(t)}{l_\e(t)}=\ds\frac{(t+\e)^{q+\beta}\la \widehat{M}f(t)}{-t^q\log\left(t+\frac{\e}{t+\e}\right)} \leq \ds\frac{(t+1)^{q+\ba}\la \widehat{M}f(t)}{t^{q+1}} \ra 0 ~\text{as} ~ t \ra 0.
	\end{align*}
Hence there exists $0<\de <\de_0$ that again does not depend on $0<\e<\e_0$ such that 
\begin{align*}
\frac{\la \widehat{M} f(t)}{l_\e(t)}\leq \frac{1}{2}~\text{for}~s \leq \de ~\text{and}~0<\e<\e_0.
\end{align*}
Consequently,
\begin{align*}
		\I{\Om}\na u_\e \na \psi \leq \la \widehat{M}\I{\Om \cap \{u_\e >\de\}} f(u_\e)\psi-\I{\Om\cap\{u_\e >1-\e\}}l_\e(u_\e)\psi dx.
\end{align*}
			Using \eqref{2.6} and $(f_2)$, we get
			\begin{align}\nonumber \label{e2.49}
				\I{\Om}\na u_\e\na \psi  \leq & \frac{\la \widetilde{M}}{\de}\I{\{u_\e >\delta \}}u_\e\exp(\al u_\e^{2})\psi dx +\I{\Om}u_\e \psi dx\\
				\leq &C \I{\Om}u_\e\exp(\al u_\e^{2})\psi  dx,~\text{for every}~ \psi \in H_0^{1}(\Om),~\psi >0
			\end{align}		
			where $C>0$ is independent of $\e$.	Rest of the proof follows exactly as the proof of Proposition $1$ in \cite{FMS}. This completes the proof. \QED
			\end{proof}
		Given a sequence $\{\e_k\}$ in the interval $(0, \e_0)$, we denote by $u_{\e_k}$
		the solutions of $(\mc P_{\e,\la})$
		provided by Proposition \ref{p2.9}.
		
		\begin{Lemma}\label{l2.11}
			Suppose that $f$ satisfies $(f_2)$-$(f_5)$. Let $\{\e_k\}$ in $(0,\e_0)$ be a sequence such that $\e_k \ra 0$ as $k \ra \infty$. Then $u_{\e_k}$ has a subsequence which converges weakly in $H_0^{1}(\Om)$ to a nonnegative and nontrivial function $u$.
		\end{Lemma}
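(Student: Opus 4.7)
The plan is to extract a weakly convergent subsequence using the uniform bound from Proposition \ref{p2.9}, and then exclude the possibility of a vanishing weak limit by combining the strictly positive lower bound on the energy level with the Euler--Lagrange identity tested against $u_{\e_k}$ itself. Since $\|u_{\e_k}\|\le M$ by Proposition \ref{p2.9}, reflexivity together with the compact Sobolev embedding yields a (not relabelled) subsequence and $u\in H_0^1(\Om)$ with $u_{\e_k}\rightharpoonup u$ in $H_0^1(\Om)$, $u_{\e_k}\to u$ in $L^p(\Om)$ for every $p\in[1,\infty)$, and $u_{\e_k}(x)\to u(x)$ a.e.~in $\Om$. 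Since each $u_{\e_k}\geq 0$, passing to the a.e.~limit gives $u\ge 0$; Lemma \ref{l2.10} furthermore guarantees $|u_{\e_k}|_\infty\le M_1$, so $0\le u\le M_1$ a.e.

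Suppose for contradiction that $u\equiv 0$. The uniform $L^\infty$ bound together with $u_{\e_k}\to 0$ a.e.~and the continuity of $F,f$ with $F(0)=f(0)=0$ enables dominated convergence: $F(u_{\e_k})\to 0$ and $f(u_{\e_k})u_{\e_k}\to 0$ in every $L^p(\Om)$. Proposition \ref{hls} then forces the double integral $\iint F(u_{\e_k}(y))F(u_{\e_k}(x))|x-y|^{-\mu}\,dx\,dy$ to tend to $0$, and the uniform $L^\infty$ estimate \eqref{2.47} on the Riesz convolution combined with $f(u_{\e_k})u_{\e_k}\to 0$ in $L^1(\Om)$ gives $\iint F(u_{\e_k}(y))f(u_{\e_k}(x))u_{\e_k}(x)|x-y|^{-\mu}\,dx\,dy\to 0$. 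For the $\e$-dependent singular terms, on $\{u_{\e_k}\leq 1-\e_k\}$ I would use the $\e$-uniform estimate $|l_\e(s)|\leq m s^{-\de}$ with $0<\ba<\de<1$ (obtained inside the proof of \eqref{2.7}) to get $|L_{\e_k}(u_{\e_k}(x))|\leq \frac{m}{1-\de}\,u_{\e_k}(x)^{1-\de}$, and on $\{u_{\e_k}>1-\e_k\}$ the polynomial bound \eqref{2.8}; a parallel splitting together with \eqref{2.10} handles $l_\e(u)u$. Dominated convergence then yields
\begin{equation*}
\I{\Om}L_{\e_k}(u_{\e_k})\,dx\longrightarrow 0\qquad\text{and}\qquad \I{\Om}l_{\e_k}(u_{\e_k})u_{\e_k}\,dx\longrightarrow 0.
\end{equation*}

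Testing the weak form of $(\mc P_{\e_k,\la})$ against $\varphi=u_{\e_k}$ and inserting the four vanishing limits above gives $\|u_{\e_k}\|^2\to 0$; this in turn forces $J_{\e_k,\la}(u_{\e_k})\to 0$, contradicting the strictly positive lower bound $J_{\e_k,\la}(u_{\e_k})\geq m_1>0$ from Proposition \ref{p2.9}. Hence $u\not\equiv 0$, as required. The most delicate point I anticipate is the uniform control of $L_{\e_k}(u_{\e_k})$ and $l_{\e_k}(u_{\e_k})u_{\e_k}$ as $\e_k\to 0^+$ and $u_{\e_k}\to 0$ jointly: the formal pointwise limit $-\int_0^t s^{-\ba}\log s\,ds$ is itself singular at the origin, so it is essential to produce an $\e$-independent integrable majorant, which the bound $|l_\e(s)|\leq m s^{-\de}$ with $\de<1$ exactly provides.
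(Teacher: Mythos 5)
Your proposal is correct and follows essentially the same strategy as the paper: extract a weakly convergent subsequence from the uniform $H_0^1$ bound, use the lower energy bound $J_{\e_k,\la}(u_{\e_k})\geq m_1>0$ from Proposition \ref{p2.9}, and establish via dominated convergence (with $\e$-uniform integrable majorants built from the estimates on $l_\e$, $L_\e$) that every term in the energy would vanish if $u\equiv 0$, yielding a contradiction. The only cosmetic difference is bookkeeping: the paper eliminates $\|u_{\e_k}\|^2$ algebraically by combining the energy with the Euler--Lagrange identity before passing to the limit, whereas you pass to the limit in the Euler--Lagrange identity first to get $\|u_{\e_k}\|^2\to 0$ and then in $J_{\e_k,\la}$, but both routes rest on the identical convergence lemmas.
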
		
		\begin{proof}
			We know from Proposition \ref{p2.9} that there exists a constant $M>0$ such that 
			\begin{equation*}
				\|u_{\e_K}\| \leq M.
			\end{equation*}
			Then, we may find a subsequence, still denoted by $\{u_{\e_k}\}$  and an element $u \in H_0^{1}(\Om)$ such that
			\begin{align}\label{e2.51}
				\begin{cases}
					u_{\e_k} \rightharpoonup u~\text{weakly in}~H_0^{1}(\Om),\\
					u_{\e_k} \ra u~\text{in}~L^p(\Om)~\text{for every}~p\geq 1,\\
					u_{\e_k}(x)\ra u(x)~\text{a.e. in }~\Om,\\
					|u_{\e_k}|\leq z_p~\text{a.e. in }~\Om~\text{for some}~z_p \in L^p(\Om).
				\end{cases}	
			\end{align}
			Since $u_{\e_k}$ is a critical point of $J_{\e_k,\la}$, we have
			\begin{equation*}
				\|u_{\e_k}\|^2+\I{\Om}l_{\e_K}(u_{\e_k})u_{\e_k}=\la \I{\Om}\I{\Om}\frac{F(u_{\e_k}(y))f(u_{\e_k}(x))u_{\e_k}(x)}{|x-y|^\mu}dydx
			\end{equation*}
			and from Proposition \ref{p2.9}
			\begin{equation*}
				J_{\e_k}(u_{\e_k})=\frac{\|u_{\e_k}\|^2}{2}+\I{\Om}L_{\e_k}(u_{\e_k})-\frac{\la}{2} \I{\Om}\I{\Om}\frac{F(u_{\e_k}(y))F(u_{\e_k}(x))}{|x-y|^\mu}dydx>m_1.
			\end{equation*}
			Hence
			\begin{align}\nonumber \label{eZ}
			m_1<&\I{\Om}\left(L_{\e_k}(u_{\e_k})-\frac{1}{2}l_{\e_k}(u_{\e_k})u_{\e_k}\right) +\la \I{\Om}\left( \I{\Om}\frac{F(u_{\e_k})}{|x-y|^\mu}dy\right)\left(\frac{1}{2}f(u_{\e_k})u_{\e_k}-F(u_{\e_k})\right)dx\\
			=&	J_{\e_k,\la}(u_{\e_k}).
		\end{align}
			Since $|u_{\e_k}|_{\infty}<M_1$ for every $k$, we have 
			\begin{equation}\label{e2.52}
				\left| \I{\Om}\frac{F(u_{\e_k}(y))}{|x-y|^\mu}dy\right| \leq C,~\text{uniformly in}~k~\text{and}~x \in \Om.
			\end{equation}
			Since $f$ satisfies $(f_3)$, we may apply \eqref{e2.46}, \eqref{e2.51}, \eqref{e2.52} and Dominated convergence theorem to obtain
			\begin{equation*}
				\lim\limits_{k \ra \infty}\I{\Om}\I{\Om}\frac{F(u_{\e_k}(y))f(u_{\e_k}(x))u_{\e_k}(x)}{|x-y|^\mu}dydx=\I{\Om}\I{\Om}\frac{F(u(y))f(u(x))u(x)}{|x-y|^\mu}dydx
			\end{equation*}
			and 
			\begin{equation*}
				\lim\limits_{k \ra \infty}\I{\Om}\I{\Om}\frac{F(u_{\e_k}(y))F(u_{\e_k}(x))}{|x-y|^\mu}dydx=\I{\Om}\I{\Om}\frac{F(u(y))F(u(x))}{|x-y|^\mu}dydx.
			\end{equation*}
			Using \eqref{2.10} and \eqref{e2.51}, we have
			\begin{equation*}
				|l_{\e_k}(u_{\e_k})u_{\e_k}|\leq C(1+z_2^{2-\ba})\in L^1(\Om).
			\end{equation*}
		Thus by above estimate and Dominated Convergence theorem, we have
		\begin{equation}
			l_{\e_k}(u_{\e_k})u_{\e_k} \ra u^{1-\ba}\log(u)\chi_{\{u>0\}}~\text{in}~L^1(\Om).
		\end{equation}
		Now, note that 
		\begin{equation*}
			|l_\e(t)|=l_\e(t)\leq -(t+\e)^{-\ba}\log(t+\e)~\text{if}~t\leq 1-\e
		\end{equation*}
		and 
		\begin{equation*}
			|l_{\e}(t)| \leq \left|\log\left(t + \frac{\e}{t+\e}\right)\right|\leq C_1\left(t + \frac{\e}{t+\e}\right)\leq C_1 (t+\e) ~\text{if}~t \geq 1-\e
		\end{equation*}
		where $C_1>0$ is a constant.
		Thus
		\begin{equation*}
			|l_\e(t)|\leq |\log(t+\e)|(t+\e)^{-\ba}+C_1(t+\e):=N(t),~\text{for}~ t \geq 0.
		\end{equation*}
		Therefore, by \eqref{e2.51} we get 
		\begin{equation*}
			|L_{\e_k}(u_{\e_k})|\leq \I{0}^{u_{\e_k}}N(t)dt \leq \I{0}^{z_1}N(t)dt \in L^1(\Om).
		\end{equation*}
		Consequently, by dominated convergence theorem, we have 
		\begin{equation*}
			\I{\Om}L_{\e_k}(u_{\e_k})\ra \I{\Om}L(u),~\text{where}~L(t)=-\I{0}^{t}s^{-\ba}\log(s)ds.
		\end{equation*}
		Taking the above claims into account and letting $k \ra \infty$ in \eqref{eZ}, we conclude that $u$ is nontrivial. \QED.
		\end{proof}		
		\section{Gradient estimates for solution of the approximate problem}\label{GE}		
		In this section, in order to show that $u$ is a weak solution to $(P_\lambda)$, we
		establish a key-point gradient estimate of the solutions $u_\e$ of problem $(\mc P_{\e,\la})$ stated in Proposition \ref{p2.9}. 
		\begin{Lemma}\label{l3.1}
			Suppose $f$ satisfies $(f_1)-(f_6)$. Let $\e_0$ be given by  \eqref{2.11} and define $\e^\ast=\min\{\e_0,1/2\}$, where $\e_0$ is given by \eqref{2.11}. Let $\psi$ be such that
			\begin{equation*}
				\psi \in C^2(\overline{\Om}),~\psi>0~\text{in}~\Om,~\psi =0~\text{on}~\partial{\Om}~\text{and}~\frac{|\na \psi|^2}{\psi}~\text{is bounded in }~\Om.
			\end{equation*}
			For each $0<\e<\e^\ast$, let $u_\e$ be the nonnegative solution of $(\mc P_{\e,\la})$ obtained in Proposition \ref{p2.9}. Then there is constant $K>0$ such that
			\begin{equation}\label{e3.1}
				\psi(x)|\na u_\e|^2\leq KZ(u_\e(x))~\text{for every}~x \in \Om,
			\end{equation}
			where 
			\begin{align*}
				Z(t)=\begin{cases}
				\ds\frac{t^2}{2}+\ds\frac{t^{1-\ba}}{(1-\ba)^2}-\ds\frac{t^{1-\ba}\log t}{1-\ba},&~0\leq t\leq 1, 	\\
				t-\frac{1}{2}+\ds \frac{1}{(1-\ba)^2},&~t \geq 1.
				\end{cases}	
			\end{align*} 
		Also the constant $K$ is independent of $0<\e<\e^\ast$ and depends only on $\Om$, $\psi$ and $M_1$ given by Lemma \ref{l2.10}. 
		\end{Lemma}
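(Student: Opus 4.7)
The proof will follow a Bernstein-type maximum principle argument in the spirit of \cite{Ph,FMS,LM,S}. Fix $0<\e<\e^\ast$, abbreviate $u=u_\e$ and $v=|\na u|^2$, and introduce the auxiliary function
\[
w(x) := \psi(x)\,v(x) - K\,Z(u(x)),\qquad x\in\overline{\Om},
\]
with $K>0$ to be chosen at the end in terms of $\Om$, $\psi$, $M_1$, $\bar{\la}$ and the structural constants of $(f_1)$--$(f_6)$ only. The goal is to show $w\le 0$ on $\overline{\Om}$, which is precisely \eqref{e3.1}. The $L^\infty$ bound of Lemma \ref{l2.10} and standard elliptic regularity yield $u\in C^2(\Om)\cap C(\overline{\Om})$. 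Since $\psi=u=0$ on $\pa\Om$ and $Z(0)=0$, one has $w=0$ on $\pa\Om$, and $Z(u)\ge 0$ on $\overline\Om$. Assume for contradiction that $\max_{\overline\Om} w>0$: the maximum is then attained at some interior $x_0\in\Om$, where $\na w(x_0)=0$ and $-\De w(x_0)\ge 0$.

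\textbf{Bernstein computation.} From $\na w(x_0)=0$ we have $\psi\na v = KZ'(u)\na u - v\na\psi$ at $x_0$. Using the equation $\De u = l_\e(u) - \la h_\e(x) f(u)$, where $h_\e(x):= \I{\Om}|x-y|^{-\mu}F(u(y))\,dy$, together with
\[
\De v = 2|D^2 u|^2 + 2 l_\e'(u)v - 2\la f(u)\na h_\e\cdot\na u - 2\la h_\e f'(u)v, \qquad \De Z(u) = Z'(u)\De u + Z''(u)v,
\]
and multiplying the inequality $\De(\psi v)\leq K\De Z(u)$ at $x_0$ by $\psi(x_0)>0$ to kill the $1/\psi$ coming from $\na v$, one arrives after rearrangement at a pointwise inequality of the schematic form
\[
2\psi^2|D^2 u|^2 + 2\psi^2 l_\e'(u)v \;\leq\; K\psi\bigl(Z''(u)v + Z'(u) l_\e(u)\bigr) + \mc R,
\]
where $\mc R$ collects the perturbative contributions containing $\De\psi$, $|\na\psi|^2$, the cross term $2KZ'(u)\na\psi\cdot\na u$, and the nonlocal factors $h_\e$, $\na h_\e$, $f(u)$, $f'(u)$.

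\textbf{Uniform control of $\mc R$ and conclusion.} The uniform-in-$\e$ ingredients needed are: (i) Lemma \ref{l2.10} gives $|u_\e|_\infty\le M_1$, hence $\|f(u_\e)\|_\infty+\|f'(u_\e)\|_\infty\le C(M_1)$ by $(f_1)$, $(f_3)$, $(f_6)$; (ii) since $\mu<1$ in dimension two, $|x-y|^{-\mu-1}$ is locally integrable, so differentiating under the integral sign furnishes $\|h_\e\|_\infty+\|\na h_\e\|_\infty\le C(M_1,\Om)$; (iii) the assumption $|\na\psi|^2/\psi\le C_\psi$. The cross term $2KZ'(u)\na\psi\cdot\na u$ is absorbed by Cauchy--Schwarz into $\psi v$ and $K^2(Z'(u))^2|\na\psi|^2/\psi$, the latter being dominated by $KZ(u)$ up to lower order thanks to the matching $(Z'(u))^2\lesssim Z(u)$ on $[0,M_1]$. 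In two dimensions one also has $|D^2u|^2\ge \tfrac12(\De u)^2$, which applied to $\De u = l_\e(u)-\la h_\e f(u)$ yields $2\psi^2|D^2u|^2 \gtrsim \psi^2 l_\e(u)^2$ modulo controllable terms. The coefficients $1/(1-\ba)$ and $1/(1-\ba)^2$ in $Z$ and its flattening to a linear function for $t\ge 1$ are engineered exactly so that $Z'(u)=u-u^{-\ba}\log u$ acts as an antiderivative of the singular part of $l_\e$ and so that the combination $K\psi(Z''(u)v+Z'(u)l_\e(u))$ has the right sign and size to absorb the dangerous term $2\psi^2 l_\e'(u)v$ at $x_0$. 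Choosing $K$ sufficiently large (depending only on $\Om$, $\psi$, $M_1$, $\bar{\la}$, and the constants in $(f_1)$--$(f_6)$, but not on $\e$) then makes every remainder estimate strict and forces $-\De w(x_0)<0$, contradicting the interior maximum. Hence $w\le 0$ on $\overline\Om$, which is \eqref{e3.1}. The principal difficulty will be the uniform-in-$\e$ matching of singular orders of $Z'(u)l_\e(u)$, $Z''(u)v$, and $l_\e'(u)v$ as $u(x_0)\to 0^+$, because $l_\e$ behaves qualitatively differently in the two asymptotic regimes $u\ll\e$ and $\e\ll u\ll 1$; once this matching is handled, the nonlocal Choquard contribution is only a lower-order perturbation.
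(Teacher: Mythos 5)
The proposal is in the right spirit — a Bernstein-type maximum-principle argument built around the weight $\psi$ and the auxiliary function $Z$ — but there are two concrete problems.

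First, the auxiliary function you use is different from the paper's, and the contradiction mechanism that comes with it is not established. You set $w=\psi|\na u|^2-KZ(u)$ (a \emph{difference}), whereas the paper works with the \emph{ratio} $v=\psi|\na u|^2/Z_a(u)$, $Z_a=Z+a$, and derives the quadratic-in-$v$ inequality $\De v\geq c(u)\psi^{-1}\bigl(v^2-C(v+\psi^{3/2}v^{1/2}+v^{3/2})\bigr)$, so that $v(x_0)>K$ large forces $\De v(x_0)>0$. In your difference form there is no automatic $v^2$ term, and you never explain what replaces it. The natural route (feed $v(x_0)>KZ(u(x_0))/\psi(x_0)$ back into the coefficient of $v$) would need to be carried out carefully; you do not do it, and instead lean on a matching claim that is simply false. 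You write ``the latter being dominated by $KZ(u)$ up to lower order thanks to the matching $(Z'(u))^2\lesssim Z(u)$ on $[0,M_1]$.'' This fails: with $Z'(t)=t-t^{-\ba}\log t$ and $Z(t)\sim\tfrac{t^{1-\ba}|\log t|}{1-\ba}$ as $t\to 0^+$, one has $(Z'(t))^2/Z(t)\sim t^{-(\ba+1)}|\log t|\to\infty$. So the absorption of the cross term $2KZ'(u)\na\psi\cdot\na u$ as you describe it does not go through.

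Second, you explicitly identify ``the uniform-in-$\e$ matching of singular orders of $Z'(u)l_\e(u)$, $Z''(u)v$ and $l_\e'(u)v$ as $u(x_0)\to 0^+$'' as the principal difficulty, and then declare that ``once this matching is handled'' the rest is lower order. But that matching \emph{is} the proof. In the paper it is precisely the set of inequalities \eqref{3.2}--\eqref{3.6} (bounding $Z_a'Z_a^{1/2}$, $Z_a|C_\e|$, $Z_a'\overline h_\e$, $Z_a$, $Z_a^{1/2}$ by $C(\tfrac12 Z_a'^2-Z_aZ_a'')$ uniformly in $\e$ and $a$) together with the asymptotics \eqref{4.9}; the separate analysis of the regimes $u\ll\e$ and $\e\ll u\ll 1$, and the fact that the constants do not degenerate as $\e\to 0$, are the substance. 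Your plan defers exactly this, so it cannot be accepted as a proof of the lemma. The surrounding ingredients you list (the $L^\infty$ bound from Lemma \ref{l2.10}, boundedness of $h_\e$ and $\na h_\e$ since $\mu<1$, the hypothesis $|\na\psi|^2/\psi$ bounded, the 2-D inequality $|D^2u|^2\geq\tfrac12(\De u)^2$) are correct and consistent with what the paper uses, but they do not address the gap above.
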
	
		
		\begin{proof} Using Lemma \ref{l2.10}, we see that the solutions $u_\e$ of $(\mc P_{\e,\la})$ are bounded in $L^{\infty}(\Om)$ by a constant $M_1>0$ independent of $\e$. This implies that right hand side of $(\mc P_{\e.\la})$ is bounded in $L^{\infty}(\Om)$. The standard elliptic regularity theory implies $u_\e \in C^{1,\nu}(\overline{\Om})$ for each $0<\nu<1$.
			Following the approach of Lemma 5.1 in \cite	{FMS}, we define 
			\begin{equation*}
				\overline{h}_\e(x,u)=-\frac{u^q}{(u+\e)^{q+\ba }}\log\left(\frac{u^2+\e u +\e}{u+\e}\right)-\la K_{\e}(x)f(u(x)),
			\end{equation*}
			where \begin{equation*}
				K_\e(x)=\I{\Om}\frac{F(u_\e(y))}{|x-y|^\mu}dy.
			\end{equation*}
			As observed in \eqref{2.47}, we know tha $|K_\e|<C$ for some positive constant $C$ independent of $\e$. 	Now define $Z_a(t)=Z(t)+a$, for some $0<a<1$. Then, we have
			\begin{equation*}
				Z_a\in C^2(0,\infty),~Z_a(t)> 0,~Z_a^\prime(t)> 0~\text{and}~Z_a^{\prime\prime}(t)\leq 0~\text{for all}~t>0.
			\end{equation*}
				Now setting $w=\displaystyle\frac{|\nabla u|^2}{Z_a(u)}$ and $v=w\psi$, we argue by contradiction and assume that \begin{equation}
						\sup_{\Omega}v>K,
						\end{equation}
				where $K>0$ will be chosen later independent of $a$ and $0<\e<\e^\ast$.\\
				First we show that $v \in C^2$ at all points $x \in \Om$ such that $u(x)>0$. Let $x \in \Om$ be one such point. Continuity of $u$ implies the existence of an open ball $U \subset \Om$ centered at $x$ such that $u>0$ in $\overline{U}$. Hence, $l_\e(u) \in C^2(U)$ and $f(u) \in C^{1,\ba}(\Om)$. This alongwith boundedness of $K_\e$ and $K_\e^{\prime}$ give $\overline{h}_\e(u) \in C^{1,\ba}(U)$. Since $u$ satisfies the equation $-\De u +	\overline{h}_\e(u)=0$ in $U$, we conclude that $u \in C^3(U)$, implying that $Z_a(u)$ and $w$ are $C^2$ in $U$. Denoting $x_0=\sup_\Omega v$,
					 we have $v(x_0)>K$. Then $x_0 \in \Om$ since $v=0$ on $\partial \Om$ and so $\nabla v(x_0) =0$ and $\Delta v(x_0)\leq 0$. We will compute $\De v$ and evaluate it at the point $x_0$. As we shall see this leads to the absurd $\De v(x_0)>0$ if one fixes $K$ large enough. Now, the estimate \cite[(55)]{FMS} (see also \cite[p. 100-103]{LM} for more details on computations), in our case, has the form
			\begin{align}\nonumber\label{4.3}
				Z_a(u)	\De v \geq& \psi w^2\left(\frac{1}{2}Z_a^{\prime}(u)^z-Z_a(u)Z_a^{\prime\prime}(u)\right)\\
				&+w\left(2\psi Z_a(u)C_\e(u)-\psi \overline{h}_\e(x,u)Z_a^{\prime}(u)-KZ_a(u)\right)\\ \nonumber
				&-2\psi\la\partial_1 K_\e f(u)Z_a^\frac{1}{2}w^\frac{1}{2}-K^{\prime}Z_a^\prime(u)Z_a(u)^\frac{1}{2}\psi^\frac{1}{2}w^\frac{3}{2},
			\end{align}
			where $C_\e(x,t)=l_\e^\prime(t)- \la K_\e(x) f^\prime(t)$ and $K^{\prime}$ is a positive constant. Now we claim that the following estimates holds uniformly for every $0<\e<\e^\ast$,
			\begin{equation}\label{3.2}
				Z_a^\prime(u)Z_a(u)^\frac{1}{2}\leq C\left(\frac{1}{2}Z_a^{\prime}(u)-Z_a^{\prime\prime}(u)Z_a(u)\right),
			\end{equation}
			\begin{equation}\label{3.3}
				Z_a(u)|C_\e(x,u)| \leq 	C\left(\frac{1}{2}Z_a^{\prime}(u)-Z_a^{\prime\prime}(u)Z_a(u)\right),
			\end{equation}
			\begin{equation}\label{3.4}
				Z_a^\prime(u)\overline{h}_\e(x,u)\leq C\left(\frac{1}{2}Z_a^{\prime}(u)-Z_a^{\prime\prime}(u)Z_a(u)\right),
			\end{equation}
			\begin{equation}\label{3.5}
				Z_a(u)\leq C\left(\frac{1}{2}Z_a^{\prime}(u)-Z_a^{\prime\prime}(u)Z_a(u)\right),
			\end{equation}
			\begin{equation}\label{3.6}
				Z_a(u)^\frac{1}{2}\leq C\left(\frac{1}{2}Z_a^{\prime}(u)-Z_a^{\prime\prime}(u)Z_a(u)\right),
			\end{equation}
			for every $0 \leq u\leq M_1$. Recall that the constant $M_1$ is independent of $\e$. And the constant $C$ depends only on $M_1$ and $\la$ but not on $\e$ and $a$.\\
			Suppose for the time being that \eqref{3.2}-\eqref{3.6} holds. Then \eqref{4.3} implies that
			\begin{align*}
				\De v \geq& \frac{\frac{1}{2} Z_a^{\prime}(u)^2-Z_a^{\prime\prime}(u)Z_a(u)}{Z_a(u)}\left(\psi w^2 -C(w+\psi w^{1/2}+\psi^{1/2}w^{3/2})\right)\\
				\geq & \frac{\frac{1}{2} Z_a^{\prime}(u)^2-Z_a^{\prime\prime}(u)Z_a(u)}{Z_a(u)\psi}\left(v^2-C(v+\psi^{3/2}v^{1/2}+v^{3/2})\right).
			\end{align*}
		Thus if $v(x_0)=\sup v>K$ for some large enough $K$ independent of $a$ and $0<\e <\e^\ast$ we obtain a contradiction to the fact that $\De v(x_0)\leq 0$. Hence there must exist $K>0$ independent of $a$ such that $ \psi |\na u|^2 \leq K Z_a(u)$ in $\Om$. The result then follows by letting $a \ra 0$.\\
		 We now prove the relations \eqref{3.2} to \eqref{3.6}.
	We consider the following two cases:\\
	\textbf{Case 1:} $u\geq 1$. In this case, left hand sides of \eqref{3.2}-\eqref{3.6} are uniformly bounded in the interval $[1,M_1]$. Also note that
	\begin{align*}
		\frac{1}{2}Z_a^{\prime}(u)-Z_a^{\prime\prime}(u)Z_a(u)=\frac{1}{2},~\text{for}~u \geq 1,
	\end{align*}
	and so right hand sides of \eqref{3.2}-\eqref{3.6} are uniformly bounded. This proves \eqref{3.2}-\eqref{3.6}.\\
	\textbf{Case 2:} $0 \leq u\leq 1$. In this case, we have
		\begin{equation*}
			Z_a(u)=	\frac{u^2}{2}+\ds\frac{u^{1-\ba}}{(1-\ba)^2}-\ds\frac{u^{1-\ba}\log u}{1-\ba}+a,~Z_a^{\prime}(u)=u-u^{-\ba}\log (u),~Z_a^{\prime\prime}(u)=1+ \frac{\ba \log(u)}{u^{\ba+1}}-\frac{1}{u^{\ba+1}}.
		\end{equation*}
	We conclude that as $ u \ra 0^+$
	\begin{eqnarray}\label{4.9}
		\displaystyle\ \frac{1}{2} Z'_a(u)-Z_a''(u)Z_a(u)\sim -a\beta u^{-(\beta+1)}\log(u)
	\end{eqnarray}
Clearly, \begin{equation*}
	Z_a(u)=\frac{u^2}{2} +\ds\frac{u^{1-\ba}}{(1-\ba)^2}-\frac{u^{1-\ba}\log(u)}{(1-\ba)}+a \leq C~\text{ for} ~0<u<M_1,
\end{equation*}
and hence assertion \eqref{3.5} holds. Assertion \eqref{3.6} holds similarly. Now using $0\leq u\leq 1$ and boundedness of $Z_a(u)$, we have
\begin{equation}\label{3.8}
	Z_a^{\prime}(u)Z_a(u)^\frac{1}{2}\leq C(1-\log(u)u^{-\ba})\leq  C(1-\log(u)u^{-2\ba})
\end{equation}
On comparing \eqref{4.9} and \eqref{3.8}, we note that \eqref{3.2} holds. Next we prove \eqref{3.4}. For this, we easily get that if $u \leq 1$
\begin{equation*}
	|Z'_a(u)\bar{h_\epsilon}(x, u)]\leq C\frac{\log(u)^2}{u^{2\beta}}
\end{equation*}
from which together with \eqref{4.9} \eqref{3.4} follows.
 Lastly, we prove \eqref{3.3}. Note that \eqref{3.3} holds easily if $1/2 \leq u \leq M_1$. Let us assume that $u <1/2$. Then since $\e <1/2$, we have $u <1-\e$. Now using $(f_6)$, we have
\begin{align*}
	|C_\e(x,u)| \leq&\ds \frac{-u^{q-1}\log\left(u +\frac{\e}{u+\e}\right)}{(u+\e)^{q+\ba+1}}-\ds\frac{\ba u^q\log\left(u +\ds\frac{\e}{u+\e}\right)}{(u+\e)^{q+\ba+1}}+\frac{u^q}{(u+\e)^{q+\ba}}\frac{(u+\e)^2-\e}{(u^2+\e u +\e)(u+\e)}\\
	&+\la|K_\e(x)||f^{\prime}(u)|\\
	\leq &\frac{-u^{q-1}\log(u)}{(u+\e)^{q+\ba+1}}-\frac{\ba u^q\log(u)}{(u+\e)^{q+\ba+1}}+u^{-(\ba+1)}+C.
\end{align*}
Since
\begin{align*}
	\frac{q\e u^{q-1}}{(u+\e)^{q+\ba+1}}\leq q\frac{u^q}{u(u+\e)^q}\frac{\e}{(u+\e)^{\ba+1}}\leq \frac{q}{u^{\ba+1}},
\end{align*}
we obtain
\begin{align*}
	|C_\e(x,u)|\leq C\left(|\log(u)|u^{-(\ba+1)}+u^{-(\ba+1)}+1\right).
\end{align*}
Hence \begin{align}\label{3.10}
	Z_a(u)|C_\e(x,u)|\leq  C\left(|\log(u)|u^{-(\ba+1)}+u^{-(\ba+1)}+1\right).
	\end{align}
	
Comparing \eqref{4.9} and \eqref{3.10}, we conclude that \eqref{3.3} holds. This completes the proof.
		 \QED
		\end{proof}		
		\section{The limit of approximate solutions} \label{las} In this section we prove Theorem \ref{tmr}. We first establish a useful lemma:
		\begin{Lemma}\label{l4.1}
			The function $u^{-\ba}\log(u)\chi_{\Om_+}$ belongs to $L^1_{loc}(\Om)$, where $\Om_+ =\{x \in \Om:u(x)>0\}$ and $u$ is given by Lemma \ref{l2.11}
		\end{Lemma}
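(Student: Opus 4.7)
My plan is to derive a uniform (in $k$) $L^1_{\mathrm{loc}}(\Omega)$ bound for $l_{\epsilon_k}(u_{\epsilon_k})$ and then invoke Fatou's lemma. The point is that on $\Omega_+=\{u>0\}$, the a.e.\ convergence $u_{\epsilon_k}\to u$ from Lemma \ref{l2.11}, combined with the elementary fact $l_\epsilon(t)\to -t^{-\beta}\log(t)$ as $\epsilon\to 0^+$ for each fixed $t>0$, gives $l_{\epsilon_k}(u_{\epsilon_k})\to -u^{-\beta}\log(u)$ a.e.\ on $\Omega_+$. Hence once we establish a uniform $L^1_{\mathrm{loc}}$ estimate, Fatou's lemma yields the claim.

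\textbf{Uniform $L^1_{\mathrm{loc}}$ estimate.} Fix a nonnegative $\phi\in C_c^\infty(\Omega)$. Testing the weak formulation of $(\mathcal P_{\epsilon_k,\la})$ with $\phi$ gives
\[
\I{\Om}l_{\epsilon_k}(u_{\epsilon_k})\phi\,dx=-\I{\Om}\na u_{\epsilon_k}\cdot\na\phi\,dx+\la\I{\Om}K_{\epsilon_k}(x)f(u_{\epsilon_k}(x))\phi(x)\,dx,
\]
where $K_{\epsilon_k}(x)=\I{\Om}F(u_{\epsilon_k}(y))|x-y|^{-\mu}\,dy$. By Proposition \ref{p2.9} the first term is bounded in absolute value by $M\|\phi\|$, and by \eqref{2.47} together with Lemma \ref{l2.10} the second is bounded by $\la\widehat M\,\sup_{[0,M_1]}|f|\cdot\int\phi$. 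Thus
\[
\left|\,\I{\Om}l_{\epsilon_k}(u_{\epsilon_k})\phi\,dx\,\right|\leq C_\phi,\quad\text{uniformly in }k.
\]
Now write $l_{\epsilon_k}=l_{\epsilon_k}^+-l_{\epsilon_k}^-$. Recall from the proof of \eqref{2.6} that $l_\epsilon(t)\leq 0$ exactly on $\{t\geq 1-\epsilon\}$ and $|l_\epsilon(t)|\leq t$ on that set, so Lemma \ref{l2.10} gives
\[
\I{\Om}l_{\epsilon_k}^-(u_{\epsilon_k})\phi\,dx\leq\I{\{u_{\epsilon_k}\geq 1-\epsilon_k\}}u_{\epsilon_k}\phi\,dx\leq M_1\I{\Om}\phi\,dx.
\]
Since $\int l_{\epsilon_k}^+\phi=\int l_{\epsilon_k}\phi+\int l_{\epsilon_k}^-\phi$, combining the two estimates furnishes
\[
\I{\Om}|l_{\epsilon_k}(u_{\epsilon_k})|\phi\,dx\leq C_\phi+2M_1\I{\Om}\phi\,dx =: C'_\phi,
\]
uniformly in $k$.

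\textbf{Conclusion via Fatou.} Given a compact $A\subset\Om$, choose $\phi\in C_c^\infty(\Om)$ with $0\leq\phi\leq 1$ and $\phi\equiv 1$ on $A$. Applying Fatou's lemma to the nonnegative sequence $|l_{\epsilon_k}(u_{\epsilon_k})|\phi$ on $\Om_+$ gives
\[
\I{A\cap\Om_+}|u^{-\ba}\log(u)|\,dx\leq\I{\Om_+}|u^{-\ba}\log(u)|\phi\,dx\leq\liminf_{k\to\infty}\I{\Om}|l_{\epsilon_k}(u_{\epsilon_k})|\phi\,dx\leq C'_\phi,
\]
so $u^{-\ba}\log(u)\chi_{\Om_+}\in L^1_{\mathrm{loc}}(\Om)$.

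\textbf{Main obstacle.} The delicate step is the uniform $L^1_{\mathrm{loc}}$ bound for $l_{\epsilon_k}(u_{\epsilon_k})$, since $l_\epsilon$ is genuinely singular near $t=0$ and there is no direct pointwise dominating function. The key mechanism is the splitting into positive and negative parts: the negative part is tamed by \eqref{2.6} together with the uniform $L^\infty$ estimate from Lemma \ref{l2.10}, while the positive part is then extracted from the equation itself. Any attempt to bypass this by a monotone argument would fail because the maximum principle is not available for the absorption nonlinearity.
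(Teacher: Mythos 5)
Your proof is correct. You test the weak formulation of $(\mathcal P_{\epsilon_k,\la})$ with a nonnegative cutoff, use the uniform $H_0^1$ bound from Proposition \ref{p2.9} and the uniform $L^\infty$ bounds \eqref{2.47} and \eqref{e2.46} to control the right-hand side, then tame the negative part $l^-_{\epsilon_k}(u_{\epsilon_k})$ (supported where $u_{\epsilon_k}\geq 1-\epsilon_k$) via \eqref{2.6} and $|u_{\epsilon_k}|_\infty<M_1$, extract the positive part from the identity $l^+=l+l^-$, and finish with Fatou applied on $\Omega_+$ where the joint continuity of $(\epsilon,t)\mapsto l_\epsilon(t)$ at $(0,u(x))$ with $u(x)>0$ gives the a.e.\ pointwise limit $|l_{\epsilon_k}(u_{\epsilon_k})|\to|u^{-\beta}\log u|$. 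This is precisely the standard mechanism behind Lemma $6.1$ of \cite{FMS}, which the paper invokes by reference, so your argument matches the paper's intended route.
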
	
		\begin{proof}
			The proof is similar to the Lemma $6.1$ in \cite{FMS}.\QED
		\end{proof}	
		Now we will prove Theorem \ref{tmr}.\\
		\begin{proof}
		The proof is essentially the same of \cite{LM}. For the sake of completeness we give the complete proof.	We will show that the solutions $u_\e$ of $(\mc P_{\e,\la})$ obtained in Proposition \ref{p2.9} converge to a weak solution, $u$ of $(\mc P_\e)$ as $\e \ra 0^+$. 
		The nontriviality of $u$ is guaranteed by Lemma \ref{l2.11}. Let $\e>0$ small enough 
		and let $\varphi \in C_c^1(\Om)$. We have
		\begin{equation}\label{e4.2}
			\I{\Om}\na u_\e \na \varphi=\I{\Om}-l_\e(u_\e)\varphi +\la \I{\Om}\I{\Om}\frac{F(u_\e(y))f(u_\e(x))\varphi(x)}{|x-y|^\mu}dydx.
		\end{equation}
		Let the cut-off function $\eta \in C^\infty(\mb R)$ such that $0\leq \eta \leq 1$, $\eta(t)=0$ for $t \leq 1$ and $\eta(t)=1$ for $t \geq 2$. For $m>0$ the function $\vartheta:=\varphi\eta(u_\e/m)$ belongs to $C_c^1(\Om)$. By \eqref{e2.46} and Lemma \ref{l3.1}, we have $|\na u_\e|$ is locally bounded independently of $0<\e<\e^\ast$. It then follows from the Arzela-Ascoli Theorem that $u_\e \ra u$ in $C_{loc}^0(\Om)$, and the set $\Om_+=\{x \in \Om:u(x)>0\}$ is open. Let $\widetilde{\Om}$ be an open set such that $support(\varphi)\subset \widetilde\Om$ and $\widetilde{\Om}\subset \Om$. Let $\Om_0=\Om_+ \cap \widetilde{\Om}$. For every $m>0$ there is an $\e_1>0$ such that
		\begin{equation}\label{e4.3}
			u_\e(x)\leq m~\text{for every}~x \in \widetilde{\Om}\setminus \Om_0~\text{and}~0<\e \leq \e_1.
		\end{equation}
		Replacing $\varphi$ by $\vartheta$ in \eqref{e4.2}, we obtain 
		\begin{equation}\label{eq4.4}
			\I{\Om}\na u_\e\na(\varphi \eta(u_\e/m))=\I{\widetilde{\Om}}-l_\e(u_\e)\varphi \eta(u_\e/m)+\la \I{\widetilde{\Om}}\I{\Om} \frac{F(u_\e(y))f(u_\e(x))}{|x-y|^\mu}\varphi \eta(u_\e/m).
		\end{equation}
		We break the right hand side integrals as
		\begin{equation*}
			A_\e:=\I{\Om_0}\left(-l_\e(u_\e)+\la \left(\I{\Om}\frac{F(u_\e(y)}{|x-y|^\mu}dy\right) f(u_\e(x))\right)\varphi\eta(u_\e/m)
		\end{equation*}
		and \begin{equation*}
			B_\e:=\I{\widetilde{\Om}\setminus\Om_0}\left(-l_\e(u_\e)+\la \left(\I{\Om}\frac{F(u_\e(y)}{|x-y|^\mu}dy\right) f(u_\e(x))\right)\varphi\eta(u_\e/m).
		\end{equation*}
		Clearly, $B_\e=0$, whenever $0 <\e<\e_1$ by \eqref{e4.3} and the definition of $\eta$. We claim that
		\begin{equation}\label{e4.5}
			A_\e \ra \I{\Om_0}\left(u^{-\ba}\log(u)+\la \left(\I{\Om}\frac{F(u(y)}{|x-y|^\mu}dy\right) f(u(x))\right)\varphi\eta(u/m)~\text{as} ~\e \ra 0.
		\end{equation}
		In fact, $u_\e \ra u$ uniformly in $\Om_0$. Then it follows from Lemma \ref{l2.10} and the Dominated Convergence Theorem that
		\begin{equation*}
			\la \I{\Om_0}\left(\I{\Om}\frac{F(u_\e(y))}{|x-y|^\mu}dy\right) f(u_\e(x))\varphi\eta(u_\e/m)\ra \la \I{\Om_0}\left(\I{\Om}\frac{F(u(y))}{|x-y|^\mu}dy\right) f(u(x))\varphi\eta(u/m)~\text{as}~\e \ra 0.
		\end{equation*}
		If $u \leq m/2$ then, for $\e>0$ sufficiently small, we have $u_\e<m$. As a consequence, the integral $A_\e$ restricted to this set is zero. If $u >m/2$, then $u_\e >m/4$ for $\e>0$ small enough. We then apply the Dominated Convergence Theorem as $\e \ra 0$ to get \eqref{e4.5}.
		We take the limit in $m$ to conclude that 
		\begin{align}\nonumber
			\I{\Om_0}\left(u^{-\ba}\log(u)+\la \left(\I{\Om}\frac{F(u(y))}{|x-y|^\mu}dy\right) f(u(x))\right)\varphi\eta(u/m)\\
			\ra \I{\Om_0}\left(u^{-\ba}\log(u)+\la \left(\I{\Om}\frac{F(u(y)}{|x-y|^\mu}dy\right) f(u(x))\right)\varphi,
		\end{align}
		as $m \ra 0$ since $\eta(u/m)\leq 1$ and $ u^{-\ba}\log(u)\chi_{\Om_+}+\la\left(\ds\I{\Om}\frac{F(u(y))}{|x-y|^\mu}dy\right) f(u(x))\in L^1(\widetilde{\Om})$ according to Lemma \ref{l4.1}.
		We now proceed with the integral on the left hand side of \eqref{eq4.4},
		\begin{equation}
			\I{\Om}\na u_\e\na (\varphi\eta)(u_\e/m):=\I{\widetilde{\Om}}(\na u_\e\na \varphi)\eta(u_\e/m)+D_\e.
		\end{equation}
		Note that
		\begin{equation*}
			\I{\widetilde{\Om}}(\na u_\e\na \varphi)\eta(u_\e/m) \ra \I{\widetilde{\Om}}(\na u\na \varphi)\eta(u/m)~\text{as}~\e \ra 0,
		\end{equation*}
		since $u_\e \rightharpoonup u$ in $H_0^{1}(\Om)$ and $u_\e \ra u$ uniformly in $\widetilde{\Om}$. Again, by the Dominated Convergence Theorem,
		\begin{equation}
			\I{\widetilde{\Om}}(\na u\na \varphi)\eta(u/m) \ra \I{\widetilde{\Om}}\na u\na \varphi~\text{as}~m \ra 0.
		\end{equation}
		We claim that
		\begin{equation}\label{e4.9}
			D_\e:=\I{\Om}\frac{|\na u_\e|^2}{m}\eta^\prime(u_\e/m)\varphi \ra 0~\text{as}~\e \ra 0~~~(\text{and then as}~m \ra 0).
		\end{equation}
		The estimate $|\na u_\e|^2 \leq K Z(u_\e)$ in $\widetilde{\Om}$ provided by Lemma \ref{l3.1} and recalling that $\eta^\prime(u_\e/m)=0$ if $u_\e \geq 2m$ yield 
			\begin{align*}
				\limsup\limits_{\e \ra 0} |D_\e| \leq& \frac{C}{m}\limsup\limits_{\e \ra 0} \I{\widetilde{\Om}\cap \{m\leq u_\e\leq 2m\}} Z(u_\e)|\eta^\prime(u_\e/m)\varphi|\\
				\leq&2 C\limsup\limits_{\e \ra 0} \I{\widetilde{\Om}\cap \{m\leq u_\e\leq 2m\}}\frac{ Z(u_\e)|\eta^\prime(u_\e/m)\varphi}{u_\e}\\
				\leq&C \sup|\eta^\prime| \sup |\varphi|\lim\limits_{\e \ra 0} \I{\widetilde{\Om}\cap \{m\leq u_\e\leq 2m\}} \frac{Z(u_\e)}{u_\e}\\
				\leq & 2 C l \sup |\varphi| \I{\widetilde{\Om}\cap \{m\leq u\leq 2m\}} \left(c_1|u|+c_2|u|^{-\ba}+c_3|u|^{-\ba}|\log (u)|\right)\chi_{\{ u>0\}},
		\end{align*}
		for every $m>0$. Letting $m \ra 0$ and using Lemma \ref{l4.1}, \eqref{e4.9} is proved. As a immediate consequence of \eqref{eq4.4}-\eqref{e4.9}, we have
		\begin{equation*}
			\I{\Om}\na u\na \varphi=\I{\Om \cap \{u>0\}}\log (u) \varphi +\I{\Om}\I{\Om}\frac{F(u(y))f(u(x))\varphi(x)}{|x-y|^\mu},~\text{for any}~\varphi \in C_c^1(\Om).
		\end{equation*}
		This concludes the proof of Theorem \ref{tmr}. \QED	
		\end{proof}	
			
		\begin{Remark}
			The case of singularity $t^{\ba}\log(t)$ replaced by singularity $|\log t|^{k-2}\log t,~k \in \mb N \setminus \{1\} $ can be tackled in the similar manner. The function $l_\e$ in this case takes the form
			$$l_\e(t)=\begin{cases}
				\left| \log\left(t+\ds \frac{\e}{t+\e}\right)\right|^{k-2}\log\left(t+ \ds\frac{\e}{t+\e}\right)~&\text{if}~t>0,\\
				0~&\text{if}~t=0.
			\end{cases}$$
	Also the $Z-$function to get the gradient estimate in this can be given as
		\begin{align*}	Z(t)=\begin{cases}
			t^2-\ds\I{0}^{t}|\log s|^{k-2}\log s, ~\text{for}~0\leq t \leq t^\ast,\\
			(t^\ast)^2+(t-t^\ast)(2t^\ast-|\log t^\ast|^{k-2}\log t^\ast) -\ds\I{0}^{t^\ast}|\log s|^{k-2}\log s ds,~\text{for}~t \geq t^\ast,
		\end{cases}
	\end{align*}
	where $t^\ast$ is such that
	\begin{equation*}
		\frac{|\log t^\ast|^{k-2}}{t^\ast}=\frac{2}{k-1}.
	\end{equation*}	
			\end{Remark}
		\textbf{Acknowledgement:} The first author thanks the CSIR(India) for financial support in the form of a Senior Research Fellowship, Grant Number $09/086(1406)/2019$-EMR-I. The second author is funded by IFCAM (Indo-French Centre for Applied Mathematics) IRL CNRS 3494.

	\end{document}